\theoremstyle{change}%
\newtheorem{definition}{Definition:}[section]%
\newtheorem{proposition}[definition]{Proposition:}%
\newtheorem{theorem}[definition]{Theorem:}%
\newtheorem{lemma}[definition]{Lemma:}%
{\theorembodyfont{\rmfamily}\newtheorem{remark}[definition]{Remark:}}%
{\theorembodyfont{\rmfamily}\newtheorem{example}[definition]{Example:}}%
\newenvironment{proof}
{{\bf Proof:}}
{\qquad \hspace*{\fill} $\Box$}%
\newcommand{\fg}{\mathfrak{g}}%
\newcommand{\fn}{\mathfrak{n}}%
\newcommand{\tr}{\operatorname{tr}}%
\newcommand{\id}{\operatorname{id}}
\newcommand{\rme}{\mathrm{e}}%
\newcommand{\CC}{\mathcal{C}}%
\newcommand{\EC}{\mathcal{E}}%
\newcommand{\ZC}{\mathcal{Z}}%
\newcommand{\AC}{\mathcal{A}}%
\newcommand{\XC}{\mathcal{X}}%
\newcommand{\DC}{\mathcal{D}}%
\newcommand{\R}{\mathbb{R}}%
\begin{document}

	\title{Isometries of Almost-Riemannian structures on nonnilpotent, solvable 3D Lie groups}%
	\author{V\'{\i}ctor Ayala%
		\thanks{
			Supported by Proyecto Fondecyt n$%
			{{}^\circ}%
			$ 1190142. Conicyt, Chile.} \\
		Instituto de Alta Investigaci\'{o}n\\
		Universidad de Tarapac\'{a}, Arica, Chile \and
Adriano Da Silva\thanks{Supported by Proyecto UTA Mayor Nº 4768-23}\\
		Departamento de Matem\'atica,\\Universidad de Tarapac\'a - Iquique, Chile.
		\and
 Danilo A. Garc\'{\i}a Hern\'andez\thanks{ Supported by Capes Finance Code 001}\\
		Instituto de Matem\'{a}tica\\
		Universidade Estadual de Campinas, Brazil\\
	}
	\date{\today }
	\maketitle
	
	\begin{abstract}
		 In this paper we prove that automorphisms are the only isometries between rank two Almost-Riemannian Structures on the class of nonnilpotent, solvable, connected 3D Lie groups. As a consequence, a classification result for rank two ARSs on the groups in question is obtained.
	\end{abstract}
	
	{\small {\bf Keywords:} Almost-Riemannian geometry, solvable Lie groups, isometry} 
	
	{\small {\bf Mathematics Subject Classification (2020): 22E15, 22E25, 53C17, 53C15.}}%
	
	\section{Introduction}



 Almost-Riemannian structures (shortly ARS) are a part of sub-Riemannian geometry with nonconstant rank. They can be seen, roughly speaking, as a smooth orthonormal frame degenerating on a singular set, called the singular locus.
 They appear naturally as limits of Riemannian metrics and in the study of hypoelliptic operators \cite{Gru}, but other works highlighting nice physical features related to these structures can also be found in the literatura (see for instance \cite{Boscain5}). The first work on the subject appeared in 1956 in the work of T. Takasu \cite{tak}. However, works on the subject, from a control theoretical point of view, have appeared recently \cite{Ag0, Ag1, Ag2, DSAyD2, PJAyGZ1, Bon1, Bon2, Boscain1, Boscain2, Boscain3, Boscain4, Boscain5, PJAyGZ}. Among them, the concept of simple ARS on connected Lie groups was introduced in \cite{PJAyGZ1} by considering $n-1$ left-invariant vector fields and one linear vector field having full rank at a nonempty subset of $G$ and satisfying the Lie algebra rank condition. The strong relation of these structures with the group symmetries allow one to obtain strong results for the singular locus (see  \cite{DSAyD2, PJAyGZ1}) and for the isometries between ARSs (see \cite{PJAyGZ}).

 Concerning isometries of ARSs, the authors showed \cite[Theorem 4]{PJAyGZ} that the only possible isometries between ARSs on a nilpotent Lie group are the automorphisms of the group. By means of a counter-example (see \cite[Section 3.4]{PJAyGZ}) they showed that such a remarkable fact is not true, in general, for nonnilpotent groups. In the paper at hand, we study isometries in the class of nonnilpotent, solvable 3D Lie groups. We show that for rank two ARSs (see Definition \ref{ranktwo}) it still holds that the only isometries are the automorphisms of the group.  In order to prove that, we transfer the problem to the algebra level, by working with the differential at the identity element. Since, by symmetry, Lie groups have all their information centered at the origin, such an approach is the most appropriate. However, since one cannot assume a priori smoothness of the differential of the isometry over the singular locus, this task turns out to be quite hard. The way to overcome the previous difficulty, is to use the smoothness of the isometry outside the singular locus, as assured by the Myers-Steenrod Theorem \cite[Theorem 8]{Myers}, in association with curves that cross the singular locus (see \cite[Section 4.1]{DSAyD2}).

    The paper is divided as follows: Section 2 is used to state the framework of the paper. Here we define the 3D Lie groups we are interested, their algebras, group of automorphisms, linear and invariant vector fields, and so on. In sequence, the concept of ARS and their isometries are (formally) defined. We also make a compilation of the main results concerning the singular locus. In Section 3 we state and prove our main result (Theorem \ref{main}) which assures that, any isometry between rank two ARSs is an automorphism of the group. The proof is divided in three subsections. In the first one we prove our ``Fundamental Lemma'', namely, we show that if an isometry preserves one left-invariant vector field of the associated distribution, then it is in fact an automorphism. In the second subsection, we show that the differential of any rank two isometry keeps the nilradical of the algebra invariant. Since, by the Lie algebra rank condition, the intersection of the nilradical and the 2D left-invariant distribution of the ARS is one-dimensional, we can show that any isometry preserves a unitary left-invariant vector field generating this intersection and, by the fundamental lemma, it is an automorphism. As consequence of the main result, we are able to obtain a classification of rank two isometries of ARSs on the 3D groups under consideration.

	\section{Preliminaries}
	
	Here we introduce the main concepts and results we will need in the next sections. In order to maintain our work self-contained, all the concepts here will be done over the manifolds we are interested, making remarks when necessary to general cases.

	\subsection{Solvable nonnilpotent 3D Lie groups and algebras}
	
	According to \cite[Chapter 7]{Oni}, up to isomorphisms, any solvable, nonnilpotent 3D Lie algebra is given by the semi-direct product $\fg(\theta)=\R\times_{\theta}\R^2$, with Lie bracket determined by the relation  
	$$[(a, 0), (0, w)]=(0, a\theta w), \;\;\;(a, w)\in \R\times\R^2,$$
	where $\theta\in\mathfrak{gl}(2, \R)$ is a matrix of one of the following types:
	$$\left(\begin{array}{cc}
	1  & 1\\ 0 & 1 
	\end{array}\right), \;\;\;\;\left(\begin{array}{cc}
	1  & 0\\ 0 & \gamma 
	\end{array}\right), \gamma\in[-1, 1]\;\;\;\;\mbox{ or }\;\;\;\;\left(\begin{array}{cc}
	\gamma  & -1\\ 1 & \gamma 
	\end{array}\right), \gamma\in\R.$$

	The {\bf nilradical} $\fn(\theta)$ of $\fg(\theta)$ coincides with a (diffeomorphic) copy or $\R^2$ inside $\fg(\theta)$ and is given by $\fn(\theta)=\{0\}\times\R^2$. 

     At the group level, up to isomorphisms, the connected, simply connected 3D Lie group with Lie algebra $\fg(\theta)$ is $G(\theta)=\R\times_{\rho}\R^2$, where $\rho_t=\rme^{t\theta}$ and the product satisfies
    $$(t_1, v_1)(t_2, v_2)=(t_1+t_2, v_1+\rho_{t_1}v_2), \;\;\; (t_1, v_1), (t_2, v_2)\in\R\times_{\rho}\R^2.$$

Following \cite[Proposition 2.1 and 2.2]{DSAyD2} the algebra of {\bf derivations} of $\fg(\theta)$ and the groups of {\bf automorphisms} of $\fg(\theta)$ and $G(\theta)$ are given, respectively, by 
 \begin{equation}
     \label{der}
     \mathrm{Der}(\fg(\theta))=\left\{\left(\begin{array}{cc}
		0 & 0\\ \xi & A
		\end{array}\right), \xi\in\R^2, A\in\mathfrak{gl}(2, \R), \mbox{ with } A\theta=\theta A\right\},
 \end{equation}

 \begin{equation}  
 \mathrm{Aut}(\fg(\theta))=\left\{\left(\begin{array}{cc}
		\varepsilon & 0\\ \eta & P
		\end{array}\right), \eta\in\R^2, P\in\mathrm{Gl}(\R^2), \mbox{ with } P\theta=\varepsilon\theta P\right\},
  \end{equation}
  and, 
   \begin{equation}
       \label{auto}
       \mathrm{Aut}(G(\theta))=\left\{\phi(t, v)=\left(\varepsilon t, Pv+\varepsilon\Lambda_{\varepsilon t}^{\theta}\eta\right), \eta\in\R^2, P\in\mathrm{Gl}(\R^2), \mbox{ with } P\theta=\varepsilon\theta P\right\},
   \end{equation}
   
   		where $\varepsilon=1$ if $\tr\theta\neq 0$ or $\varepsilon\in\{-1, 1\}$ if $\tr\theta=0$. Here, for any matrix $A\in \mathfrak{gl}(2, \R)$ we have that
     $$\Lambda^A: (t, w)\in \R\times \R^2\mapsto \Lambda_t^Aw:=\int_0^t\rme^{sA}w ds.$$

   	A {\bf linear} and a {\bf left-invariant} vector field on $G(\theta)$ are given, respectively, as
    $$\XC(t, v):=(0, Av+\Lambda^{\theta}_t\xi)\hspace{.5cm}\mbox{ and }\hspace{.5cm}Y^L(t, v):=(\alpha, \rho_t\eta),$$
    where $\alpha\in\R$, $\xi, \eta\in\R^2$ and $A\in\mathfrak{gl}(2, \R)$ satisfies $A\theta=\theta A$. We denote by $\ZC_{\XC}$ the set of {\bf singularities} of $\XC$ (or the set of fixed points of $\varphi_s)$, that is, 
     $$\ZC_{\XC}=\{(t, v)\in G(\theta); \;\;\XC(t, v)=0\}.$$
     
     The flow of $\XC$ is a $1$-parameter subgroup of $\mathrm{Aut}(G(\theta))$ and is given by 
    $$\varphi_s(t, v)=\left(t, \rme^{sA}v+\Lambda_t^{\theta}\Lambda_s^A\xi\right).$$
    It is naturally associated with the derivation 
    $$\DC=\left(\begin{array}{cc}
   	0 & 0 \\ \xi & A
   	\end{array}\right), \hspace{.5cm}\mbox{ satisfying }\hspace{.5cm} (d\varphi_s)_{(0, 0)}=\rme^{s\DC}, \hspace{.5cm}\forall s\in\R.$$
    
   	Since $\xi\in\R^2$ and $A\in\mathfrak{gl}(2, \R)$ determine a linear vector field $\XC$, when necessary we will write $\XC=(\xi, A)$ in order to emphasize such elements.

 On the other hand, by \cite[Section 3]{DSAy1}, the exponential maps $\exp:\fg(\theta)\rightarrow G(\theta)$
is given as 
$$
	\exp (\alpha, \eta)=\left\{\begin{array}{cc}
	(0, \eta)  & \mbox{ if }\alpha=0\\
	\left(\alpha, \frac{1}{\alpha}\Lambda^{\theta}_{\alpha}\eta\right) & \mbox{ if }\alpha\neq 0		
	\end{array}\right..
	$$
 Therefore, if 
 $$L_{(t, v)}:G(\theta)\rightarrow G(\theta), \hspace{1cm}L_{(t, v)}(\hat{t}, \hat{v})=(t, v)(\hat{t}, \hat{v})=(t+t', v+\rho_t\hat{v})$$ stands for the left-translation,  the fact that 
 $$(dL_{(t, v)})_{(\hat{t}, \hat{v})}(\alpha, \eta)=(\alpha, \rho_{t}\eta),$$
implies that the flow $\{\varphi^Y_s\}_{s\in\R}$ of $Y^L$ is given by 
$$\varphi^Y_s(t, v)=L_{(t, v)}(\exp s(\alpha, \eta))=(t, v)\exp s(\alpha, \eta).$$

	\begin{remark}
		\label{exp}
		From the formula of the exponential above, it is not hard to see that, if $(a, w)\neq(0, 0)$, then
		$$\exp s_1(\alpha, \eta)=\exp s_2(\alpha, \eta)\hspace{.5cm}\implies \hspace{.5cm} s_1=s_2.$$		Such simple fact will be useful ahead.
\end{remark}

		\subsection{Simple ARS's on $G(\theta)$}

  We define now simple almost-Riemannian structures on the groups $G(\theta)$ and state their main properties. We start with the definition of invariant distributions.

		\begin{definition}
		    A 2D left-invariant distribution on $G(\theta)$ is given by the map 
		$$\Delta^L:G(\theta)\rightarrow TG(\theta), \hspace{1cm} \Delta^L(t, v)=(dL_{(t, v)})_{(0, 0)}\Delta,$$
		where $\Delta\subset\mathfrak{g}(\theta)$ is a 2D vector subspace.
		\end{definition}

  In what follows, we will always assume that a 2D left-invariant distribution $\Delta^L$ is endowed with a left-invariant Riemannian metric. That is done by considering, on $\Delta$, an inner product $\langle\cdot, \cdot\rangle_{\Delta}$ and defining 
		$$\forall X, Y\in \Delta^L(t, v), \;\;\; \langle X, Y\rangle_{(t, v)}:=\langle (dL_{(t, v)^{-1}})_{(t, v)}X, (dL_{(t, v)^{-1}})_{(t, v)}Y\rangle_{\Delta}.$$

  Let $\XC$ be a linear vector field and consider the family of vectors $\Sigma=\{\XC, \Delta^L\}$. We say that $\Sigma$ satisfies the {\bf Lie algebra rank condition (LARC)} if (at least) one of the following conditions is satisfied:

  	\begin{itemize}
				\item[(i)] $\Delta$ is not a subalgebra of $\fg(\theta)$;
				\item[(ii)] $\Delta$ is a subalgebra of $\fg(\theta)$ and $\DC\Delta\not\subset\Delta$, where $\DC$ is the derivation associated with $\XC$.
			\end{itemize}

  Next we define the concept of almost-Riemannian structures on $G(\theta)$.
		
		\begin{definition}
			\label{ARS}
			A simple {\bf almost-Riemannian structure (shortly ARS)} $\Sigma$ on $G(\theta)$ is defined by the family $\Sigma=\{\XC, \Delta^{L}\}$, where $\XC$ is a linear vector field and $\Delta^{L}$ is a 2D left-invariant distribution $\Delta$ endowed with a left-invariant Euclidean metric satisfying  
			\begin{itemize}
				\item[(i)] The set $\{(t, v)\in G(\theta);\;\;\XC(t, v)\notin\Delta^L(t, v)\}$ is nonempty;
				\item[(ii)] $\Sigma$ satisfies the LARC.
			\end{itemize}
   The metric on $G(\theta)$ associated with $\Sigma$ is defined by declaring that $\XC$ is unitary and orthogonal to $\Delta^L$ at every point.
			
		\end{definition}

		The {\bf singular points} $\ZC$ is the set of points $(t, v)\in G(\theta)$ where the previous metric fails to be Riemannian, that is
		$$\ZC:=\{(t, v)\in G(\theta); \;\XC(t, v)\in \Delta^L(t, v)\}.$$
		Since any linear vector In particular, the set of singularities $\ZC_{\XC}$ of $\XC$ is contained in $\ZC$.

		Any ARS $\Sigma$ define an {\bf almost-Riemannian norm} on $G(\theta)$ given by 
		$$\|X\|_{\Sigma, (t, v)}=\min\left\{\sqrt{\alpha_0^2+\alpha_1^2+\alpha_2^2}; \;\alpha_0\XC(t, v)+\alpha_1Y_1^L(t, v)+\alpha_2Y^L_2(t, v)=X\right\},$$
		where $\{Y_1, Y_2\}$ is an orthonormal basis of $\Delta$. It holds that $\|X\|_{\Sigma, (t, v)}=\infty$ when $(t, v)\in\ZC$ and $X\notin\Delta(t, v)$. 

    One of the interest in the study of ARSs is to analyze the possible isometries of the almost-Riemannian norm. Precisely, we say that a diffeomorphism $\psi:G(\theta)\rightarrow G(\theta)$ is an {\bf isometry} between ARSs $\Sigma_1$ and $\Sigma_2$ if   
		$$\forall (t, v)\in G(\theta), Z\in T_{(t, v)}G(\theta)\hspace{1.5cm}\|(d\psi)_{(t, v)}Z\|_{\Sigma_2, \phi(t, v)}=\|Z\|_{\Sigma_1, (t, v)}.$$

Let us denote by $\mathrm{Iso}_{\theta}(\Sigma_1; \Sigma_2)$ the group of isometries between the ARSs $\Sigma_1$ and $\Sigma_2$. Following \cite[Theorem 1]{PJAyGZ}, any isometry $\psi\in \mathrm{Iso}_{\theta}(\Sigma_1; \Sigma_2)$ can be decomposed as $\psi=L_{(t, v)}\circ\psi_0$, where $(t, v)\in \ZC_2$ and $\psi_0\in \mathrm{Iso}_{\theta}(\Sigma_1; \Sigma_2)_0$, with 
		$$\mathrm{Iso}_{\theta}(\Sigma_1; \Sigma_2)_0=\left\{\psi_0\in \mathrm{Iso}_{\theta}(\Sigma_1; \Sigma_2); \;\psi_0(0, 0)=(0, 0)\right\}.$$
		Therefore, in order to understand the group of isometries $\mathrm{Iso}_{\theta}(\Sigma_1; \Sigma_2)$ it is enough to study the singular locus $\ZC_2$ of $\Sigma_2$ and the subgroup $\mathrm{Iso}_{\theta}(\Sigma_1; \Sigma_2)_0$.
		
		Moreover, for any isometry $\psi\in \mathrm{Iso}_{\theta}(\Sigma_1; \Sigma_2)_0$ it holds that
		\begin{equation}
		\label{conjugation}
		\psi_*\Delta^L_1=\Delta_2(\psi)\hspace{.5cm}\mbox{ and }\hspace{.5cm}\psi\circ\varphi^1_s=\varphi^2_{\pm s}\circ\psi,
		\end{equation}
		where $\{\varphi_s^i\}_{s\in\R}$ is the flow associated with $\XC_i$ (see \cite[Theorem 2]{PJAyGZ}). Consequently,   
		$$\psi(\ZC_1)=\ZC_2\hspace{.5cm}\mbox{ and }\hspace{.5cm}\psi(\ZC_{\XC_1})=\ZC_{\XC_2}.$$
		
		Since outside the singular locus the metric is Riemannian, the restriction 
		$$\psi:G(\theta)\setminus \ZC_1\rightarrow G(\theta)\setminus \ZC_2,$$
		is an isometry between Riemannian manifolds. By Myers-Steenrod Theorem \cite[Theorem 8]{Myers} such restriction is of class $\CC^{\infty}$ on the connected components of $G(\theta)\setminus\ZC_1$. This fact will allows us to study the differential of an isometry between ARSs. 

  In what follows we present the main properties concerning the singular locus $\ZC$ of an ARS. For the proof the reader can consult \cite[Section 4]{DSAyD2}. 
  
 \begin{theorem}
 \label{properties}
     Let $\Sigma=\{\XC, \Delta\}$ be an ARS on $G(\theta)$ with $A\neq 0$ and $\ZC$ it singular locus. It holds:
\begin{enumerate}
    \item $\ZC$ is a closed, connected, embedded submanifold of $G(\theta)$;
    \item $G(\theta)\setminus \ZC$ has exactly two connected components;
    
    \item If $(t, v)\in G(\theta)\setminus \ZC$, the curves
    $$s\in\R\mapsto (t, v)\exp s(\alpha, \eta)\in G(\theta)\hspace{.5cm}\mbox{ and }\hspace{.5cm}s\in\R\mapsto \varphi_s(t, v)\in G(\theta),$$
    remain in the same connected component of $G(\theta)\setminus \ZC$ of they cross $\ZC$ discretely;
\end{enumerate}
     
 \end{theorem}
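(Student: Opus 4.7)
My plan is to realize the singular locus $\ZC$ as the regular zero set of an explicit smooth function on $G(\theta)$ and then read off the three statements from its structure. Condition~(i) of Definition~\ref{ARS} rules out $\ZC=G(\theta)$; equivalently, $\Delta$ is not the nilradical $\fn(\theta)=\{0\}\times\R^2$, so I may pick a basis $Y_1=(1,\eta_1),\,Y_2=(0,\eta_2)$ of $\Delta$ with $\eta_2\neq 0$. Writing $\XC=(\xi,A)$ and expanding the $3\times 3$ determinant whose columns are $\XC(t,v),Y_1^L(t,v),Y_2^L(t,v)$ along the first row, one obtains that, up to sign,
$$F(t,v) := \det\bigl[Av+\Lambda_t^\theta\xi,\;\rho_t\eta_2\bigr]$$
vanishes precisely on $\ZC$.

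The first step is to check that $0$ is a regular value of $F$, which gives statement~(1). For fixed $t$, $F(t,\cdot)$ is affine in $v$, with linear part the map $v\mapsto\det[Av,\rho_t\eta_2]$. Since $A\neq 0$ and $A$ commutes with $\theta$, a short case analysis (according as $\operatorname{rank}(A)=2$ or $1$) shows that this linear part vanishes identically in $v$ only for a discrete set of times~$t$. At those exceptional $t$'s, I would invoke the LARC (in particular condition~(ii), using $\DC\Delta\not\subset\Delta$) to show that $\partial_t F$ does not vanish on the corresponding slice, giving regularity and the embedded 2-submanifold structure. Closedness is automatic, as $\ZC=F^{-1}(0)$.

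For statement~(2), $F$ has nonzero gradient on $\ZC$, so the open sets $\{F>0\}$ and $\{F<0\}$ are both nonempty. Their connectedness can be read off the projection $(t,v)\mapsto t$: for each $t$, $F(t,\cdot)$ is a nonzero affine function of $v$, whence $\{v:F(t,v)>0\}$ and $\{v:F(t,v)<0\}$ are connected half-planes separated by the affine line $\ZC\cap(\{t\}\times\R^2)$. Continuous dependence of this line in $t$ patches the half-planes together over the $t$-axis into exactly two global components, and the same patching proves connectedness of $\ZC$ itself.

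Finally, statement~(3) is an analyticity argument. Both $s\mapsto(t,v)\exp s(\alpha,\eta)$ and $s\mapsto\varphi_s(t,v)=\bigl(t,\rme^{sA}v+\Lambda_t^\theta\Lambda_s^A\xi\bigr)$ are real-analytic in~$s$, so substituting into $F$ produces a real-analytic function of~$s$; if it is identically zero the whole curve lies in $\ZC$, otherwise its zero set is discrete. The main obstacle in the whole argument is the regularity step: one must carefully rule out, via the LARC and the hypothesis $A\neq 0$, the degenerate configurations at which $F$ and $dF$ could simultaneously vanish. Once that is in place, (1)--(3) follow cleanly from the affine structure in $v$ and analyticity along orbits.
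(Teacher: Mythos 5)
The paper does not actually contain a proof of Theorem \ref{properties} --- it defers to \cite[Section 4]{DSAyD2} --- so your argument can only be judged on its own terms. Your overall strategy is the right one and is essentially the standard argument: realize $\ZC$ as the zero set of $F(t,v)=\det\bigl[Av+\Lambda_t^{\theta}\xi,\ \rho_t\eta_2\bigr]$ (your reduction to a basis $Y_1=(1,\eta_1)$, $Y_2=(0,\eta_2)$ of $\Delta$ via condition (i) of Definition \ref{ARS} is correct), exploit the affine dependence on $v$ for items (1) and (2), and use real-analyticity of $s\mapsto F(\gamma(s))$ together with $F(\gamma(0))\neq 0$ for item (3). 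Item (3) is complete as sketched.

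The genuine gap is in your description of when the $v$-linear part of $F(t,\cdot)$ degenerates, and it affects both (1) and (2). Since $A\theta=\theta A$, the subspace $\mathrm{Im}\,A$ is $\theta$-invariant and hence $\rho_t$-invariant; therefore the functional $v\mapsto\det[Av,\rho_t\eta_2]$ vanishes identically either for \emph{no} $t$ (when $\mathrm{rank}\,A=2$, or when $\mathrm{rank}\,A=1$ and $\eta_2\notin\mathrm{Im}\,A$) or for \emph{every} $t$ (when $\mathrm{rank}\,A=1$ and $\eta_2\in\mathrm{Im}\,A$). There is no ``discrete set of exceptional times'': the dichotomy is all-or-nothing, so your plan of repairing isolated bad slices with $\partial_tF\neq0$ does not address the actual degenerate configuration. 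In that configuration $F$ does not depend on $v$ at all, each slice $\{t\}\times\R^2$ lies entirely inside or entirely outside $\ZC$, the ``separating line in each slice'' picture on which your proofs of (1) and (2) rest collapses, and a priori $\ZC$ is a union of parallel planes --- which would contradict connectedness and the two-component count if there were more than one. To close the gap you must show in this case that $h(t):=\det[\Lambda_t^{\theta}\xi,\rho_t\eta_2]$ vanishes only at $t=0$. This is exactly where the LARC enters: here $\R\eta_2=\mathrm{Im}\,A$ is $\theta$-invariant, so $\Delta$ is a subalgebra and the LARC forces $\DC\Delta\not\subset\Delta$, which amounts to $\xi\notin\R\eta_2$; a short computation for each admissible $\theta$ then yields $h(t)=c\,\rme^{\lambda t}\int_0^t\rme^{\mu s}\,ds$ with $c\neq0$, whose unique (and simple) zero is $t=0$, so that $\ZC=\{0\}\times\R^2$. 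With this case treated separately and the generic case handled by your half-plane patching, the proof goes through.
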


 \begin{remark}
     It is important to remark that by Theorem \ref{prop1}, the image of the curves 
     $$s\in\R\mapsto (t, v)\exp s(\alpha, \eta)\in G(\theta)\hspace{.5cm}\mbox{ and }\hspace{.5cm}s\in\R\mapsto \varphi_s(t, v)\in G(\theta),$$
     starting at points outside the singular locus, remains outside, up to a discrete set. 
 \end{remark}

	\section{Isometries of ARSs}

In this section we study the isometries between a class of ARSs on the groups $G(\theta)$. We show that for such class, the isometries are actually group automorphisms. The main difficult in proving this fact is that we cannot assume smoothness over the singular locus. In order to avoid this problem, we use smoothness outside the singular locus (assured by \cite[Theorem 8]{Myers}), jointly with curves crossing the singular locus.

        We start with a result, whose proof can be found at \cite[Proposition 3.3]{DSAyD2}, showing automorphisms of the group can be seen as isometries between ARSs. 
		
		\begin{proposition}
			\label{difeo}
			Let $\Sigma=\{\XC=(\xi, A), \Delta^L\}$ be an ARS on $G(\theta)$ and 
			$$\psi(t, v)=(\varepsilon t, Pv+\varepsilon\Lambda_{\varepsilon t}^{\theta}\eta)\;\;\mbox{ an automophism of }G(\theta).$$
			The family 
			$$\Sigma_{\psi}=\left\{\XC_{\psi}=\left( P^{-1}(\varepsilon\xi+ A\eta), P^{-1}AP\right), \Delta^L_{\psi}:=(\psi_*)^{-1}\Delta^L\right\},$$
			is an ARS and $\psi$ is an isometry between $\Sigma_{\psi}$ and $\Sigma$, where the left-invariant metric on $\Delta_{\psi}$ is the one that makes $(d\phi)_{(0, 0)}\bigl|_{\Delta_{\phi}}$ an isometry.
		\end{proposition}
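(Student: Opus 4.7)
The plan is to reverse-engineer the formula for $\XC_\psi$ from the natural requirement that $\psi$ intertwines the flows, i.e. $(d\psi)_{(t,v)}\XC_\psi(t,v)=\XC(\psi(t,v))$ for every $(t,v)$. First I would compute the differential of $\psi$ explicitly. Since $\psi(t,v)=(\varepsilon t,Pv+\varepsilon\Lambda^{\theta}_{\varepsilon t}\eta)$ and $\frac{d}{dt}\Lambda^{\theta}_t=\rho_t$, I get $(d\psi)_{(t,v)}(\alpha,w)=(\varepsilon\alpha,Pw+\alpha\rho_{\varepsilon t}\eta)$. Plugging this into the intertwining relation and using $\XC(t,v)=(0,Av+\Lambda^{\theta}_t\xi)$ together with the identity $P\Lambda^{\theta}_t=\varepsilon\Lambda^{\theta}_{\varepsilon t}P$ (a direct consequence of $P\theta=\varepsilon\theta P$), the formula $\XC_\psi=(P^{-1}(\varepsilon\xi+A\eta),P^{-1}AP)$ drops out. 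To confirm $\XC_\psi$ is genuinely a linear vector field I must check that $P^{-1}AP$ commutes with $\theta$, which follows by combining $A\theta=\theta A$ with $P\theta=\varepsilon\theta P$ (the factor $\varepsilon^2=1$ cancels correctly).

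Next I would introduce $\Delta_\psi:=(d\psi)^{-1}_{(0,0)}\Delta\subset\fg(\theta)$ and show that the distribution $(\psi_*)^{-1}\Delta^L$ is precisely the left-invariant distribution generated by $\Delta_\psi$, with the pullback inner product on $\Delta_\psi$ as the metric. The key identity here is the automorphism property $\psi\circ L_p=L_{\psi(p)}\circ\psi$; differentiating at the origin yields $(d\psi)_p(dL_p)_{(0,0)}=(dL_{\psi(p)})_{(0,0)}(d\psi)_{(0,0)}$, which both shows left-invariance of $\Delta^L_\psi$ and guarantees that $(d\psi)_p$ sends the left-invariant orthonormal frame of $\Delta_\psi$ pointwise to the left-invariant orthonormal frame of $\Delta$. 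The axioms of Definition \ref{ARS} for $\Sigma_\psi$ now transfer mechanically from $\Sigma$: the transversality set of $\Sigma_\psi$ is the $\psi$-preimage of the transversality set of $\Sigma$; and the LARC is preserved because $(d\psi)_{(0,0)}$ is a Lie algebra automorphism (so $\Delta_\psi$ is a subalgebra iff $\Delta$ is) together with the conjugation relation $\DC\circ(d\psi)_{(0,0)}=(d\psi)_{(0,0)}\circ\DC_\psi$, verified directly on the explicit $2\times 2$ block matrices.

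Finally, for the isometry property, I would fix orthonormal bases $\{Y_1,Y_2\}$ of $\Delta$ and $\{Y^\psi_1,Y^\psi_2\}:=\{(d\psi)^{-1}_{(0,0)}Y_1,(d\psi)^{-1}_{(0,0)}Y_2\}$ of $\Delta_\psi$. By the two conjugations already established — $(d\psi)_p\XC_\psi(p)=\XC(\psi(p))$ and $(d\psi)_pY^{\psi,L}_i(p)=Y^L_i(\psi(p))$ — any representation $Z=\alpha_0\XC_\psi(p)+\alpha_1Y^{\psi,L}_1(p)+\alpha_2Y^{\psi,L}_2(p)$ is mapped by $(d\psi)_p$ to the analogous representation $\alpha_0\XC(\psi(p))+\alpha_1Y^L_1(\psi(p))+\alpha_2Y^L_2(\psi(p))$. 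Taking the minimum over all such decompositions in the definition of the almost-Riemannian norm then gives $\|(d\psi)_pZ\|_{\Sigma,\psi(p)}=\|Z\|_{\Sigma_\psi,p}$. The main obstacle is purely bookkeeping: the routine but delicate verification that $P\Lambda^{\theta}_t=\varepsilon\Lambda^{\theta}_{\varepsilon t}P$ and that the conjugation of derivations $\DC\circ(d\psi)_{(0,0)}=(d\psi)_{(0,0)}\circ\DC_\psi$ hold at every point and not merely at the identity; once these are in hand, the structural identity $\psi\circ L_p=L_{\psi(p)}\circ\psi$ propagates all computations from the origin to arbitrary points of $G(\theta)$.
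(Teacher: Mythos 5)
Your proposal is correct: the differential $(d\psi)_{(t,v)}(\alpha,w)=(\varepsilon\alpha,Pw+\alpha\rho_{\varepsilon t}\eta)$, the identity $P\Lambda^{\theta}_t=\varepsilon\Lambda^{\theta}_{\varepsilon t}P$, and the block-matrix conjugation $\DC\circ(d\psi)_{(0,0)}=(d\psi)_{(0,0)}\circ\DC_{\psi}$ all check out, and together with $\psi\circ L_p=L_{\psi(p)}\circ\psi$ they give exactly the frame intertwining needed to transfer the ARS axioms and the almost-Riemannian norm. The paper itself omits the proof, deferring to Proposition 3.3 of the cited reference, but your verification is the standard one and is complete.
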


In \cite[Theorem 4]{PJAyGZ} the authors proved that on nilpotent Lie groups, the only isometries between ARSs, fixing the identity element, are automorphisms of the group. Such fact is not necessarily true for the groups $G(\theta)$ as Example \ref{example} ahead shows. Despite this fact, in this section we present a class of ARSs in which the only possible isometries are in fact automorphisms.

\begin{example}
\label{example}
   	Consider now $G(\theta)$ and the simple ARS $\Sigma=\{\XC, \Delta^L\}$, where
   	$$\theta=\begin{pmatrix}
   		1 & 0   \\
   		0 & 0   
   	\end{pmatrix},\;\;\;\;\;\XC=\left((0, 0), \left(\begin{array}{cc}
   		0 & 1 \\  0 & 0
   	\end{array}\right)\right)\;\;\;\;\mbox{ and }\;\;\;\;\alpha=\{(1,0),(0, \mathbf{e}_2)\},$$
  	is an orthonormal basis of $\Delta$. It is not hard to see that $\Sigma$ satisfies the LARC. Moreover, for any $(t, v)\in G(\theta)$, it holds
  	$$(dL_{(t, v)})_{(0, 0)}(1, 0)=(1, 0)\;\;\;\mbox{ and }\;\;\;(dL_{(t, v)})_{(0, 0)}(0, \mathbf{e}_2)=(0, \mathbf{e}_2).$$
  	
  	It is not hard to see that the map,
  	$$\psi: G(\theta)\rightarrow G(\theta), \;\;\;(t, v)\mapsto (-t, v),$$
  	is a diffeomorphism of $G(\theta)$ and, by \ref{auto}, it holds that $\psi\notin\mathrm{Aut}(G(\theta))$. On the other hand,
  	$$\psi\left(\XC(t, v)\right)=\psi\left(0, Av\right)=(0, Av)=\XC(-t, v)=\XC(\psi(t, v)),$$
  	$$\psi(1, 0)=(-1, 0)\;\;\;\mbox{ and }\;\;\;\psi(0, \mathbf{e}_2)=(0, \mathbf{e}_2),$$
  	and since $\psi_*=\psi$, we conclude that $\psi$ carries the orthonormal frame $\{\XC, (1, 0), (0, \mathbf{e}_2)\}$ onto the orthonormal frame $\{\XC, -(1, 0), (0, \mathbf{e}_2)\}$ showing that $\psi$ is an isometry.
   \end{example}

	\bigskip
	
	 Let  $\psi:G(\theta)\rightarrow G(\theta)$ be an isometry between ARSs $\Sigma_1$ and $\Sigma_2$ and consider $f:G(\theta) \rightarrow \R$ and $g:G(\theta)\rightarrow\R^2$ the coordinate functions of $\psi$. Write, on the canonical basis,
	$$\psi_*=\left(\begin{array}{cc}
	\partial_1 f & (\partial_2 f)^T \\ \partial_1 g & \partial_2g 
	\end{array}\right),$$
	where for $(t, v)\in G(\theta)$, $\partial_2f(t, v)$ is the gradient vector of the map partial map $v\in \R^2\mapsto f(t, v)\in \R$.

	If $\{\varphi_s^i\}_{t\in\R}$ is the flow associated with the linear vector fields of $\Sigma_i$, we have by (\ref{conjugation}) that   
	$$\psi\circ \varphi_s^1=\varphi^2_s\circ\psi,$$
	where we consider only the positive case, since the negative is analogous. Using the expression for the flows of $\XC_i$, allow us to obtain
	\begin{equation}
	\label{com}
	\begin{array}{l}
	   \hspace{-1cm}f\left(t, \rme^{sA_1}v+\Lambda_s^{A_1}\Lambda_t^{\theta}\xi_1\right)=f(t, v)\hspace{.5cm} \mbox{ and } \\
	     \hspace{3cm} g\left(t, \rme^{sA_1}v+\Lambda_s^{A_1}\Lambda_t^{\theta}\xi_1\right)=\rme^{sA_2}g(t, v)+\Lambda_s^{A_2}\Lambda_{f(t, v)}^{\theta}\xi_2.
	\end{array}
	\end{equation}

 \subsection{Rank two ARSs}

 	In this section we introduce the class of rank two ARSs and prove that the only isometries between elements of such class are automorphisms of the group. This result allow us to classify, up to automorphisms, the only possible rank two ARS on the Lie groups $G(\theta)$ under consideration.

\begin{definition}
	\label{ranktwo}
		Let $\Sigma=\{\XC, \Delta^L\}$ to be a simple ARS on $G(\theta)$. We say that $\Sigma$ is a {\bf rank two} ARS if the derivation $\DC$ associated with $\XC$ has rank two. Equivalently, if $\XC=(\xi, A)$ then $\Sigma$ has rank two if and only if
			\begin{equation}	
		\R^2=\mathrm{Im}A+\R\xi.
		\end{equation}		
		\end{definition}

  \bigskip

 By the conjugation formula (\ref{conjugation}) one gets direct that rank two ARSs are preserved by isometries. Moreover, any derivation in (\ref{der}) has rank maximum equal two, and hence, the set of rank two derivations of $\fg(\theta)$ is open and dense in $\mathrm{Der}(\fg(\theta))$. On the other hand, since $G(\theta)$ is simply connected, the set of linear vector fields is isomorphic to $\mathrm{Der}(\fg(\theta))$ (see \cite[Theorem 2.2]{AyTi}), which allows us to conclude that ``most'' ARSs on $G(\theta)$ are of rank two.

	Next we state our main result. 
		\begin{theorem}
		\label{main}
		The only isometries between rank two ARSs on $G(\theta)$ that fixes the identity element are the automorphisms.
	\end{theorem}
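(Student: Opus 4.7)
The plan is to reduce Theorem \ref{main} to the Fundamental Lemma from the previous subsection, which asserts that any isometry between rank two ARSs fixing the identity and preserving a unit left-invariant vector field inside the distribution is an automorphism of $G(\theta)$. It therefore suffices, for an arbitrary $\psi \in \mathrm{Iso}_{\theta}(\Sigma_1;\Sigma_2)_0$, to exhibit such a preserved unit left-invariant vector field.

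First, I would exploit the flow conjugation (\ref{conjugation}). Since $(0,0)$ is a common fixed point of the flows $\varphi^i_s$ and $(d\varphi^i_s)_{(0,0)} = \rme^{s\DC_i}$, differentiating the identity $\psi \circ \varphi^1_s = \varphi^2_{\pm s} \circ \psi$ in space at $(0,0)$ and then in $s$ at $s=0$ yields $(d\psi)_{(0,0)}\,\DC_1 = \pm\,\DC_2\,(d\psi)_{(0,0)}$. Hence $(d\psi)_{(0,0)}$ carries $\mathrm{Im}(\DC_1)$ isomorphically onto $\mathrm{Im}(\DC_2)$. By the description (\ref{der}), every derivation of $\fg(\theta)$ has image in $\fn(\theta)=\{0\}\times\R^2$, and the rank two hypothesis forces that image to be two-dimensional; hence $\mathrm{Im}(\DC_i)=\fn(\theta)$ and therefore $(d\psi)_{(0,0)}\fn(\theta)=\fn(\theta)$.

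Next, I would isolate the preserved unit left-invariant vector field. The LARC rules out $\Delta_i\subset\fn(\theta)$: equality would force $\Delta_i=\fn(\theta)$, which is abelian (hence a subalgebra), while (\ref{der}) gives $\DC_i\Delta_i\subset\{0\}\times\R^2=\Delta_i$, contradicting condition (ii). Consequently $\Delta_i+\fn(\theta)=\fg(\theta)$ and $\dim(\Delta_i\cap\fn(\theta))=1$. Since the ARS norm at $(0,0)$ restricted to $\Delta$ coincides with the left-invariant inner product, $(d\psi)_{(0,0)}$ restricts to a linear isometry $\Delta_1\to\Delta_2$; combining this with the preservation of $\fn(\theta)$, the line $\Delta_1\cap\fn(\theta)$ is sent isometrically to $\Delta_2\cap\fn(\theta)$. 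Choosing unit generators $Y_1,Y_2$ of these lines (adjusting signs if needed) yields $(d\psi)_{(0,0)}Y_1=Y_2$, and the Fundamental Lemma then delivers $\psi\in\mathrm{Aut}(G(\theta))$.

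The main obstacle is not any of the structural steps above, which are essentially forced by the rank two hypothesis together with LARC; rather it lies in the Fundamental Lemma itself, whose proof must propagate pointwise data at the identity into a global identification of $\psi$ with an automorphism. This is delicate because $\psi$ is only known to be $\CC^\infty$ off the singular locus $\ZC_1$, via Myers–Steenrod. The argument should combine that smoothness with the discrete-crossing property of one-parameter subgroups $s\mapsto(t,v)\exp s(\alpha,\eta)$ and of the linear flows $s\mapsto\varphi_s(t,v)$ (Theorem \ref{properties}(3)), thereby matching $\psi$ with the candidate automorphism on an open dense subset of $G(\theta)$ and extending by continuity across $\ZC_1$.
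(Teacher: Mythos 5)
There is a genuine gap, and it sits exactly where your proposal leans on the Fundamental Lemma. The hypothesis (\ref{fundamentalequation}) of that lemma is not the pointwise condition $(d\psi)_{(0,0)}Y_1=Y_2$ at the identity; it is the \emph{global} intertwining
$(d\psi)_{(t,v)}X^L(t,v)=(dL_{\psi(t,v)})_{(0,0)}(d\psi)_{(0,0)}X$ for \emph{every} $(t,v)\in G(\theta)$, i.e.\ that $\psi_*$ carries the left-invariant vector field $X^L$ to the left-invariant field generated by $(d\psi)_{(0,0)}X$. At $(t,v)=(0,0)$ this identity is a tautology, so all of its content lies in the points away from the identity. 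Your argument (differentiating $\psi\circ\varphi^1_s=\varphi^2_{\pm s}\circ\psi$ at the origin to get $(d\psi)_{(0,0)}\DC_1=\pm\DC_2(d\psi)_{(0,0)}$, hence $(d\psi)_{(0,0)}\fn(\theta)=\fn(\theta)$, hence a unit generator of $\Delta_1\cap\fn(\theta)$ sent to one of $\Delta_2\cap\fn(\theta)$) produces data only at $(0,0)$ and therefore does not verify the lemma's hypothesis.

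What is actually needed is that the differential of $\psi$ at \emph{every} point sends $\Delta_1^L(t,v)\cap(dL_{(t,v)})_{(0,0)}\fn(\theta)$ into $\Delta_2^L(\psi(t,v))\cap(dL_{\psi(t,v)})_{(0,0)}\fn(\theta)$, with a globally constant sign. Writing $\psi=(f,g)$, the first of these is equivalent to $\partial_2f\equiv 0$ on all of $G(\theta)$, and this is the analytic core of the paper: it occupies the whole section on invariance of the nilradical, proved by a case analysis on the eigenvalues of $A_1$ (nonzero real parts, pure imaginary, and $\R^2=\mathrm{Im}A_1\oplus\R\xi_1$), using boundedness of $\alpha_i\circ\varphi^1_s$ along the linear flow, the explicit form of $\ZC_1$, and the relation (\ref{LD}). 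Your linearization at the identity cannot substitute for this, since nothing a priori prevents $\partial_2f$ from being nonzero away from $(0,0)$ even when $(d\psi)_{(0,0)}$ preserves $\fn(\theta)$. The remaining ingredients you cite (the sign-constancy via curves crossing $\ZC_1$, and the final appeal to the Fundamental Lemma) do match the paper, but the proposal as written skips the step that carries essentially all of the difficulty.
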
	
	
    The proof of Theorem \ref{main} is divided in the next three sections. First we show that, if $\psi$ preserves one left-invariant vector field of the associated distribution, then $\psi$ is in fact an automorphism of $G(\theta)$. This result simplifies our problem to search for only one  left-invariant vector which is preserved by the isometry. In the second part, we show that the differential of any rank two isometry lets the nilradical $\fn(\theta)$ of $\fg(\theta)$ invariant, or equivalently, if $\psi=(f, g)$ then $\partial_2f\equiv 0$. In the third and last section, we show that if $\psi_*$ keeps the nilradical invariant, then it preserves unitary left-invariant vector field generating the intersection of the nilradical with the distribution of the ARS. As a consequence, $\psi$ is an automorphism by the first section.

    \subsection{The fundamental lemma}

	 We use this section to prove a technical lemma which gives us a sufficient condition for an isometry between rank two ARS to be a group automorphism.

	\begin{lemma}[Fundamental Lemma]
		\label{fundamentallemma}
		Let $\Sigma_i=\{\XC_i, \Delta^L_i\}, i=1, 2$ be simple ARSs on the $G(\theta)$ with rank two. If  $\psi\in\mathrm{Iso}_{\theta}\left(\Sigma_1;\Sigma_2\right)_0$ satisfies 
		\begin{equation}
		\label{fundamentalequation}
		\forall (t, v)\in G(\theta);\;\;(d\psi)_{(t, v)}X^L(t, v)=(dL_{\psi(t, v)})_{(0, 0)}(d\psi)_{(0, 0)}X,
		\end{equation}
	     for some nonzero vector $X\in \Delta_1$, then $\psi\in\mathrm{Aut}(G(\theta))$. 
	\end{lemma}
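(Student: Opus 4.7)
The plan is to prove that $\psi$ is a group homomorphism; being a diffeomorphism with $\psi(0,0)=(0,0)$, it will then lie in $\mathrm{Aut}(G(\theta))$ by (\ref{auto}). Set $T:=(d\psi)_{(0,0)}$ and $Y:=TX$. Condition (\ref{fundamentalequation}) says precisely that $\psi_\ast X^L=Y^L$, and since the flow of a left-invariant vector field is right-translation, integration yields
$$\psi(p\exp tX)=\psi(p)\exp tY,\qquad\forall\,p\in G(\theta),\ t\in\R.$$

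The next step upgrades this multiplicativity from the direction $X$ to the whole distribution $\Delta_1$. Pick $X'\in\Delta_1$ orthogonal to $X$ with $\|X'\|=\|X\|$. Since $(d\psi)_p$ restricts to an isometry $\Delta^L_1(p)\to\Delta^L_2(\psi(p))$ and $(d\psi)_pX^L(p)=Y^L(\psi(p))$, the image $(d\psi)_p(X')^L(p)$ is a vector of norm $\|X\|$ in $\Delta^L_2(\psi(p))$ orthogonal to $Y^L(\psi(p))$. On the rank-$2$ bundle $\Delta^L_2$ equipped with its left-invariant metric, a $90^\circ$ rotation $J$ is itself left-invariant, so the orthogonal complement of $Y^L$ in $\Delta^L_2$ is globally spanned by $(JY)^L$. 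By continuity on the connected manifold $G(\theta)$, the sign $\varepsilon_0\in\{\pm1\}$ with $(d\psi)_p(X')^L(p)=\varepsilon_0(JY)^L(\psi(p))$ is a constant, whence $\psi_\ast(X')^L=(TX')^L$ with $TX'=\varepsilon_0JY$. By linearity $\psi_\ast Z^L=(TZ)^L$ for every $Z\in\Delta_1$, so integrating flows once more gives
$$\psi(p\exp tZ)=\psi(p)\exp t\,TZ,\qquad\forall\,p\in G(\theta),\ t\in\R,\ Z\in\Delta_1.$$

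I now invoke the LARC of $\Sigma_1$. If $\Delta_1$ is not a subalgebra of $\fg(\theta)$, the Lie algebra generated by $\Delta_1$ equals $\fg(\theta)$, so the subgroup generated by $\{\exp tZ:Z\in\Delta_1,\ t\in\R\}$ is open in $G(\theta)$ and hence coincides with $G(\theta)$; iterating the previous identity yields $\psi(pq)=\psi(p)\psi(q)$ for all $p,q\in G(\theta)$. If on the other hand $\Delta_1$ is a subalgebra, the LARC forces $\DC_1\Delta_1\not\subset\Delta_1$. Conjugating the previous identity by $\varphi^1_s$, and using the automorphism property $\varphi^1_s(\exp rZ)=\exp r\,e^{s\DC_1}Z$ of each $\varphi^i_s$, the intertwining $\psi\circ\varphi^1_s=\varphi^2_{\varepsilon s}\circ\psi$, and the relation $T\DC_1=\varepsilon\DC_2T$ obtained by differentiating the flow conjugation at $(0,0)$, one obtains (after writing $p=\varphi^1_s(q)$ and reducing to the Step~2 identity applied to $q\exp tZ$)
$$\psi(p\exp tW)=\psi(p)\exp t\,TW,\qquad\forall\,W\in e^{s\DC_1}\Delta_1,\ s\in\R.$$
Because $\DC_1\Delta_1\not\subset\Delta_1$, the linear span of $\bigcup_{s\in\R}e^{s\DC_1}\Delta_1$ is all of $\fg(\theta)$, so the subgroup generated by the corresponding exponentials is open and equals $G(\theta)$; the multiplicative identity then extends globally.

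In either case $\psi$ is a group homomorphism, and being a diffeomorphism it belongs to $\mathrm{Aut}(G(\theta))$. The main obstacle I expect is the subalgebra branch of Step~3: one has to combine the flow conjugation with the Step~2 identity carefully so that $T$ and $\DC$ interact to produce $TW$ with the right signs, and the bookkeeping of the constant $\varepsilon_0$ (equivalently, of whether $T$ commutes or anticommutes with the rotation $J$) introduced when extending from $X^L$ to $(X')^L$ has to be propagated through that computation.
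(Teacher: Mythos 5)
Your reduction to showing that $\psi$ is a group homomorphism is a genuinely different (and in principle cleaner) route than the paper's, which instead computes the coordinate functions $f,g$ explicitly and matches them against the known form of $\mathrm{Aut}(G(\theta))$. But your argument has a gap exactly at the step the paper's proof spends most of its effort on: the claim that the sign $\varepsilon_0$ in $(d\psi)_p(X')^L(p)=\varepsilon_0(p)(JY)^L(\psi(p))$ is constant ``by continuity on the connected manifold $G(\theta)$''. The function $\varepsilon_0$ is only defined and continuous on $G(\theta)\setminus\ZC_1$: the orthogonality-and-norm comparison you use to pin down $(d\psi)_p(X')^L(p)$ up to sign relies on the restriction of the almost-Riemannian inner product to $\Delta^L_2(\psi(p))$ being the left-invariant one, which fails on $\ZC_2=\psi(\ZC_1)$ (there $\XC_2$ enters $\Delta^L_2$ and the norm is a minimum over representations), and the $\CC^\infty$ regularity of $\psi$ furnished by Myers--Steenrod is likewise only available off the singular locus. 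Since $G(\theta)\setminus\ZC_1$ has \emph{two} connected components (Theorem \ref{properties}, using the rank-two hypothesis to get $A_1\neq 0$), continuity only gives you two a priori independent signs $\varepsilon^{\pm}$, and a reflection-type isometry with $\varepsilon^+\neq\varepsilon^-$ is precisely the kind of non-automorphism you must rule out. The paper's Step 1 does this by integrating along exponential curves on each component separately, using that such curves meet $\ZC_1$ discretely, matching the two resulting formulas $\psi((t,v)\exp sY)=\psi(t,v)\exp s\varepsilon^{\pm}Z$ at a crossing point, and invoking the injectivity of $s\mapsto\exp sZ$ (Remark \ref{exp}) to force $\varepsilon^+=\varepsilon^-$; when no exponential curve crosses $\ZC_1$ it instead uses that $\ZC_1$ is an embedded $2$D submanifold to produce a transversal curve and pass to the limit. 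Some argument of this kind is indispensable, and without it your subsequent integration of $\psi_*(X')^L$ across $\ZC_1$ is not justified either.

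Granting that $\varepsilon^+=\varepsilon^-$, the rest of your plan is sound and does diverge from the paper: the identity $\psi(p\exp tZ)=\psi(p)\exp tTZ$ for $Z\in\Delta_1$, combined with the LARC dichotomy (generation of $G(\theta)$ by $\exp\Delta_1$ when $\Delta_1$ is not a subalgebra, or conjugation by the flows together with $Te^{s\DC_1}=e^{\varepsilon s\DC_2}T$ when it is), does yield multiplicativity on a generating set and hence a homomorphism, and a bijective continuous homomorphism of Lie groups is an automorphism. What this buys is that you avoid the paper's Steps 2--4 (the explicit determination of $f(t,v)=\varepsilon t$ and of $\partial_2 g$, with its case analysis on $\theta$); what it costs is that all the genuine difficulty is concentrated in the sign-matching across $\ZC_1$, which is the one step you have not supplied.
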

	
	\begin{proof} Let $(\sigma, \eta)\in\Delta_1$ with $\sigma\neq 0$. The automorphism, 
 $\hat{\psi}(t, v)=\left(t, v-\sigma^{-1}\Lambda_t^{\theta}\eta\right),$ is such that 
 $$(d\hat{\psi})_{(0, 0)}(\sigma, \eta)=(\sigma, 0),$$
and by Proposition (\ref{difeo}) we can assume w.l.o.g. that $(1, 0)\in \Delta_1$. Under this assumption, we prove the lemma in four steps:
		\bigskip

		{\bf Step 1:} If (\ref{fundamentalequation}) holds for a nonzero $X\in\Delta_1$, then it holds fol all $Z\in\Delta_1$, that is, 		
		 $$ (d\psi)_{(t, v)}Z^L(t, v)=(dL_{\psi(t, v)})_{(0, 0)}(d\psi)_{(0, 0)}Z, \hspace{1cm}\forall (t, v)\in G(\theta), Z\in\Delta_1.$$
		
		Let $Y\in\Delta_1$ satisfying 
		$$\langle X, Y\rangle_{\Delta_1}=0, \hspace{.5cm}\mbox{  and }\hspace{.5cm}\|Y\|_{\Sigma_1, (0, 0)}=\|X\|_{\Sigma_1, (0, 0)}.$$
		By linearity, it is enough to show that relation (\ref{fundamentalequation}) holds for $Y$. From the left-invariance of the metric in $\Delta^L_1$, we get that $\{X^L(t, v), Y^L(t, v)\}$ is an orthogonal basis of $\Delta_1^L(t, v)$, satisfying 
		$$\|Y^L(t, v)\|_{\Sigma_1, (t, v)}=\|Y\|_{\Sigma_1, (0, 0)}=\|X\|_{\Sigma_1, (0, 0)}=\|X^L(t, v)\|_{\Sigma_1, (t, v)}.$$
		Since $\psi$ is an isometry, the vector $(d\psi)_{(t, v)}Y^L(t, v)$ belongs to $\Delta_2^L(\psi(t, v))$, is orthogonal to $(d\psi)_{(t, v)}X^L(t, v)$ and
		$$\|(d\psi)_{(t, v)}Y^L(t, v)\|_{\Sigma_2, \psi(t, v)}=\|Y^L(t, v)\|_{\Sigma_1, (t, v)}=\|Y\|_{\Sigma_1, (0, 0)}.$$
		On the other hand, the left-invariance of the metric in $\Delta^L_2$ implies that the vector $(dL_{\psi(t, v)})_{(0, 0)}(d\psi)_{(0, 0)}Y$ also belongs to $\Delta_2(\psi(t, v))$, is orthogonal to $(dL_{\psi(t, v)})_{(0, 0)}(d\psi)_{(0, 0)}X$ and 
		$$\|(dL_{\psi(t, v)})_{(0, 0)}(d\psi)_{(0, 0)}Y\|_{\Sigma_2, \psi(t, v)}=\|(d\psi)_{(0, 0)}Y\|_{\Sigma_2, (0, 0)}=\|Y\|_{\Sigma_1, (0, 0)}.$$ 
		Since by hypothesis, 
		$$(d\psi)_{(t, v)}X^L(t, v)=(dL_{\psi(t, v)})_{(0, 0)}(d\psi)_{(0, 0)}X,$$
		the fact that $\dim(\Delta_2^L(\psi(t, v))=2$ forces that 
		\begin{equation}
		\label{blabla}
		(d\psi)_{(t, v)}Y^L(t, v)=\varepsilon(t, v)(dL_{\psi(t, v)})_{(0, 0)}(d\psi)_{(0, 0)}Y, \;\;\;\mbox{ where }\;\;\;\varepsilon(t, v)=\pm 1.
		\end{equation}
		Moreover, by orthogonality, we obtain that 
		$$\varepsilon(t, v)=\frac{\left\langle(d\psi)_{(t, v)}Y^L(t, v), (dL_{\psi(t, v)})_{(0, 0)}(d\psi)_{(0, 0)}Y\right\rangle_{\Sigma_2, \psi(t, v)}}{\|Y\|_{\Sigma_1, (0, 0)}},\hspace{1cm}\forall (t, v)\in G(\theta)\setminus\ZC_1,$$
		showing that $\varepsilon$ is a continuous function on $G(\theta)\setminus\ZC_1$ and hence, constant on each connected component of $G(\theta)\setminus\ZC_1$. Now, the rank two assumption implies in particular that $A_1\not\equiv 0$ and hence, by Theorem \ref{properties},  $G(\theta)\setminus\ZC_1$ has two connected $\CC_1^+$ and $\CC_1^-$. Let us define $\varepsilon^+:=\varepsilon|_{\CC_1^+}$ and $\varepsilon^-:=\varepsilon|_{\CC_1^-}$.

        Let $(t, v)\in\CC_1^+$ and assume that the exponential curve $s\in\R\mapsto (t, v)\exp sY$ intersects the singular locus $\ZC_1$.
        
        Define the sets 
	    $$I^+:=\{s\in\R; \; (t, v)\exp sY\in\CC_1^+\}\hspace{.5cm}\mbox{ and }\hspace{.5cm}I^-:=\{s\in\R; \; (t, v)\exp sY\in\CC_1^-\}.$$
        It holds that $I^{+}$ and $I^-$ are open sets, $I^+\neq\emptyset$ and, by Theorem \ref{properties}, $\R\setminus (I^+\cup I^-)$ is discrete.
	     The curves
	    $$\gamma_{\pm}:I^{\pm}\rightarrow G, \hspace{1cm} \gamma_{\pm}(s):=\psi\left((t, v)\exp sY\right).$$
	    are differentiable and by equation (\ref{blabla}) satisfies 
	    $$\frac{d}{ds}\gamma_{\pm}(s)=(d\psi)_{(t, v)\exp s Y}Y^L((t, v))\exp sY)=(\varepsilon^{\pm}Z)^L(\psi((t, v)\exp sY))=(\varepsilon^{\pm}Z)^L(\gamma_{\pm}(s)),$$
	    where for simplicity $Z:=(d\psi)_{(0, 0)}Y$. Therefore, $\gamma_{\pm}(s)$ coincides with the solution of the ODE defined by the left-invariant vector field $\varepsilon^{\pm} Z^L$ on the open set  $I^{\pm}\times\CC^{\pm}_1$. By uniqueness we get that 
	    $$\psi((t, v)\exp sY)=\psi(t, v)\exp s\varepsilon^{\pm}Z, \hspace{1cm}\forall s\in I^{\pm}.$$
	    Let us assume that $I^-\neq\emptyset$. In this case, there exists $s_0\in \R\setminus I^+\cup I^-$ and $\delta>0$ such that 
        $$(t, v)\exp sY\in\CC_1^+, s\in (s_0, s_0+\delta)\hspace{.5cm}\mbox{ and }\hspace{.5cm}(t, v)\exp sY\in\CC_1^-, s\in (s_0-\delta, s_0).$$
	    By continuity, we get that 
	    $$\psi(t, v)\exp s_0\varepsilon^{+}Z=\psi((t, v)\exp s_0Y)=\psi(t, v)\exp s_0\varepsilon^{-}Z$$
	    implying that $\exp s_0\varepsilon^{+}Z=\exp s_0\varepsilon^{-}Z$ and, by Remark \ref{exp}, $\varepsilon^+=\varepsilon^-$.
	    
	    On the other hand, if $I^-=\emptyset$, let us consider an open set $(t, v)\in U\subset \CC_1^+$ and $s_0\in\R\setminus I^+$. The open set $U\exp s_0Y$ intersects $\ZC_1$ at the point $(t, v)\exp s_0Y$ and, since $\ZC_1$ is a 2D embedded manifold, we have that 
	    $$U\exp s_0Y\cap\CC_1^-\neq\emptyset.$$
	    In particular, there exists $(\hat{t}, \hat{v})\in\CC_1^+$ such that $(\hat{t}, \hat{v})\exp s_0Y\in\CC_1^-$. As a consequence, both of the sets 
	    $$\hat{I}^+:=\{s\in\R; \; (\hat{t}, \hat{v})\exp sY\in\CC_1^+\}\hspace{.5cm}\mbox{ and }\hspace{.5cm}\hat{I}^-:=\{s\in\R; \; (\hat{t}, \hat{v})\exp sY\in\CC_1^-\},$$
	    are nonempty. By the previous analysis, with $(\hat{t}, \hat{v})$ instead of $(t, v)$, we conclude that $\varepsilon^+=\varepsilon^-$.

        An analogous analysis, shows also that $\varepsilon^+=\varepsilon^-$ if for some $(t, v)\in\CC^-$ the exponential curve $s\in\R\mapsto (t, v)\exp sY$ intersects $\ZC_1$.

        Let us assume that for all $(t, v)\in G(\theta)\setminus\ZC_1$ the curve  $s\in\R\mapsto (t, v)\exp s(a, w)$ is contained in $G(\theta)\setminus\ZC_1$, that is, the exponential curves do not leave the connected component it started. As previously, for all $(t, v)\in\CC_1^{\pm}$ we get
	    $$\psi((t, v)\exp sY)=\psi(t, v)\exp s\varepsilon^{\pm}Z, \hspace{1cm}\forall s\in\R.$$
	    Using again that $\ZC_1$ is an embedded 2D manifold, there always exists a continuous curve $\gamma:(-\delta, \delta)\rightarrow G(\theta)$ crossing the singular locus $\ZC_1$, that is, 
	    $$\gamma(0)\in\ZC, \hspace{.5cm}\gamma(-\delta, 0)\subset\CC_1^-\hspace{.5cm}\mbox{ and }\hspace{.5cm}\gamma(0, \delta)\subset \CC_1^+.$$
     Therefore, for all $s\in\R$ it holds that 
        $$\psi(\gamma(\tau)\exp sY)=\psi(\gamma(\tau))\exp s\varepsilon^+Z,\hspace{.5cm}\tau\in (0, \delta)\hspace{.5cm}\mbox{ and }$$
        $$\hspace{.5cm}\psi(\gamma(\tau)\exp sY)=\psi(\gamma(\tau))\exp s\varepsilon^-Z, \hspace{.5cm}\tau\in (-\delta, 0).$$
        Taking the limit $\tau\rightarrow 0$ from both sides, allows us to conclude that 
        $$\psi(\gamma(0))\exp s\varepsilon^+Z=\psi(\gamma(0)\exp sY)=\psi(\gamma(0))\exp s\varepsilon^-Z, \hspace{1cm}\forall s\in\R,$$
        implying that $\varepsilon^+=\varepsilon^-$ and concluding the proof of Step 1.
        
		\bigskip

		{\bf Step 2:} It holds that $f(t, v)=at$ and $g(t, v)=g(0, v)+\Lambda_t^{a\theta}\partial_1g(0, 0)$.
		
		\bigskip
		
		Since $(dL_{(t, v)})_{(0, 0)}(1, 0)=(1, 0)\in\Delta_1$ we have that 
		$$\Bigl(\partial_1f(t, v), \partial_1g(t, v)\Bigr)=(d\psi)_{(t, v)}(1, 0)=(dL_{\psi(t, v)})(d\psi)_{(0, 0)}(1, 0)$$
		$$=(dL_{\psi(t, v)})\Bigl(\partial_1f(0, 0), \partial_1g(0, 0)\Bigr)=\Bigl(\partial_1f(0, 0), \rho_{f(t, v)}\partial_1g(0, 0)\Bigr),$$
		implying that 
		\begin{equation}
		\label{um}
		\partial_1f(t, v)=\partial_1f(0, 0)\;\;\;\mbox{ and }\;\;\;\partial_1g(t, v)=\rho_{f(t, v)}\partial_1g(0, 0).
		\end{equation}
		Analogously, for $(0, \eta)\in\Delta\cap\fn(\theta)$, with $\eta\neq 0$,
		$$\left(\langle\partial_2f(t, v), \rho_t\eta\rangle_{\R^2}, \partial_2g(t, v)\rho_t\eta\right)=(d\psi)_{(t, v)}(0, \rho_t\eta)=(dL_{\psi(t, v)})_{(0, 0)}(d\psi)_{(0, 0)}(0, \eta)$$
		$$=(dL_{\psi(t, v)})_{(0, 0)}\Bigl(\langle \partial_2f(0, 0), \eta\rangle_{\R^2}, \partial_2g(0, 0)\eta\Bigr)=\Bigl(\langle\partial_2f(0, 0), \eta\rangle_{\R^2}, \rho_{f(t, v)}\partial_2g(0, 0)\eta\Bigr),$$
		implying that 
		\begin{equation}
		\label{dois}
		\langle\partial_2f(t, v), \rho_t \eta\rangle_{\R^2}=\langle\partial_2f(0, 0), \eta\rangle_{\R^2}\hspace{.5cm}\mbox{ and }\hspace{.5cm}\partial_2g(t, v)\rho_t \eta=\rho_{f(t, v)}\partial_2g(0, 0)\eta.
		\end{equation}
		
		Integration of the first equation in (\ref{um}) on the variable $t\in\R$ gives us that 
		$$f(t, v)=at+h(v), \hspace{.5cm}\mbox{ where }\hspace{.5cm} a=\partial_1f(0, 0)\hspace{.5cm}\mbox{ and }\hspace{.5cm}h(v)=f(0, v).$$
		Using the previous on the first equation of (\ref{dois}) allows us to obtain 
  \begin{equation}
\label{equi}		
  \langle\partial_2f(t, v), \rho_t \eta\rangle_{\R^2}=\langle\nabla h(v), \rho_t \eta\rangle_{\R^2}\stackrel{(\ref{dois})}{=} \underbrace{\langle\nabla h(0), \eta \rangle_{\R^2}}_{=\langle\partial_2f(0, 0), \eta\rangle_{\R^2}}\stackrel{(\ref{dois})}{=}\langle\nabla h(v), \eta \rangle_{\R^2}=\langle\partial_2f(0, v), \eta\rangle_{\R^2}    
  \end{equation}
		implying that  $\langle\nabla h(v), \eta-\rho_t\eta\rangle_{\R^2}=0$ for all $t\in\R$, $v\in\R^2$.
  
		On the other hand, if $\XC_1=(\xi_1, A_1)$ , then
		\begin{equation}
		\label{h}
		 f\circ\varphi_s^1=f\hspace{.5cm}\stackrel{(\ref{com})}{\implies}
\hspace{.5cm}h(\rme^{sA_1}v+\Lambda_t^{\theta}\Lambda_s^{A_1}\xi_1)=h(v),\hspace{.5cm}\forall s, t\in\R
		\end{equation}
	gives us that,
		$$h(\rme^{sA_1}v)=h(v)\hspace{.5cm}\implies\hspace{.5cm} \nabla h(\rme^{sA_1}v)=\rme^{-sA_1^{\intercal}}\nabla h(v),$$
	and hence,
		$$\langle\nabla h(v), \eta \rangle_{\R^2}\stackrel{(\ref{equi})}{=}\langle\nabla h(0), \eta \rangle_{\R^2}\stackrel{(\ref{equi})}{=}\langle\nabla h(\rme^{sA_1}v), \eta\rangle_{\R^2}=\langle \rme^{-sA_1^{\intercal}}\nabla h(v), \eta\rangle_{\R^2}=\langle \nabla h(v), \rme^{-sA_1}\eta\rangle_{\R^2},$$
  which gives us  $\langle\nabla h(v), \eta-\rme^{-sA_1}\eta) \rangle_{\R^2}=0,$ for all $s\in\R$, $v\in\R^2$. Lastly, derivation of (\ref{h}) first at $t=0$ and then at $s=0$, gives us $\langle\nabla h(v), \xi_1\rangle_{\R^2}=0,$ for all $v\in\R^2$. 
  
  Now, if $\nabla h(v)\neq 0$ for some $v\in\R^2$, we must have that the set 
$$\{\eta-\rme^{-sA_1}\eta, \eta-\rho_t\eta, \xi_1\}\hspace{.5cm}\mbox{ is linearly dependent }\hspace{.5cm}\forall s, t\in\R.$$

Derivation of $s=0$ gives us that $\{A_1\eta, \xi_1\}$ is also linearly dependent. By the rank two hypothesis, the only way this can happen is if $\det A_1\neq 0$. On the other hand, if $\det A_1\neq 0$, a second derivative in $s=0$ forces  $\eta$ to be an eigenvector of $A_1$. This, together with the commutativity of $A_1$ and $\theta$ implies that $\eta$ is also an eigenvector of $\theta$.

However, since $\Delta_1=\mathrm{span}\{(1, 0), (0, \eta)\}$ we get that $\Delta_1$ is a subalgebra and 
$$\DC(1, 0)=(0, \xi_1)\in\Delta_1\hspace{.5cm}\mbox{ and }\hspace{.5cm}\DC_1(0, \eta)=(0, A_1\eta)\in\Delta_1\hspace{.5cm}\implies\hspace{.5cm}\DC_1\Delta_1\subset\Delta_1,$$
which contradicts the fact that $\Sigma_1$.
 Therefore, 
		$$\nabla h(v)=0\hspace{.5cm}\forall v\in\R^2\hspace{.5cm}\implies\hspace{.5cm} h(v)=h(0)=f(0, 0)=0, \hspace{.5cm}\forall v\in\R^2\hspace{.5cm}\implies\hspace{.5cm}f(t, v)=at,$$
	    as stated.
		
		For the function $g$, using the expression $f(t, v)=at$ on the second equation of (\ref{um}), allows us to conclude that 
		$$\partial_1g(t, v)=\rho_{at}\partial_1g(0, 0),$$
		which by integration gives us 
		$$g(t, v)-g(0, v)=\int_0^t\partial_1g(s, v) ds=\int_0^t\rho_{as}\partial_1g(0, 0) ds=\Lambda_t^{a\theta}\partial_1g(0, 0),$$
		showing the assertion for $g$. 
		\bigskip
		
		{\bf Step 3:} For all $(t, v)\in G(\theta)$ and $s\in\R$, it holds that 
		\begin{equation}
		\label{xi}
		\partial_2g(0, v)\rho_{t}\xi_1=\rho_{at}\partial_2g(0, 0)\xi_1\hspace{.5cm}\mbox{ and }\hspace{.5cm} \partial_2g(0, v)\rho_t\rme^{-sA_1}\eta=\rho_{at}\partial_2g(0, 0)\rme^{-sA_1}\eta.
		\end{equation} 
		
		By the second equation in (\ref{com}), it holds that 
		$$g\left(t, \rme^{sA_1}v+\Lambda_s^{A_1}\Lambda_t^{\theta}\xi_1\right)=\rme^{sA_2}g(t, v)+\Lambda_s^{A_2}\Lambda_{at}^{\theta}\xi_2,$$
  where $\XC_2=(\xi_2, A_2)$. By Step 2,
		$$g\left(0, \rme^{sA_1}v+\Lambda_s^{A_1}\Lambda_t^{\theta}\xi_1\right)+\Lambda_{at}^{\theta}\nu=\rme^{sA_2}g(0, v)+\rme^{sA_2}\Lambda_{at}^{\theta}\nu+\Lambda_s^{A_2}\Lambda_{at}^{\theta}\xi_2,$$
		where for simplicity we put $\nu=\partial_1g(0, 0)$. Therefore,  
		\begin{equation}
		\label{uau}
		g\left(0, \rme^{sA_1}v+\Lambda_s^{A_1}\Lambda_t^{\theta}\xi_1\right)-\rme^{sA_2}g(0, v)=(\rme^{sA_2}-\id_{\R^2})\Lambda_{at}^{\theta}\nu+\Lambda_s^{A_2}\Lambda_{at}^{\theta}\xi_2=
		\Lambda_s^{A_2}\Lambda_{at}^{\theta}\left(\xi_2+A_2\nu\right),
		\end{equation}
  where for the last equality we used that $\rme^{sA_2}-\id_{\R^2}=\Lambda_{t}^{A_2}A_2$ and $A_2\theta=\theta A_2$. 	Derivation of equation (\ref{uau}) on $t$ gives us, by the chain rule, that 
		$$\partial_2g\left(0, \rme^{sA_1}v+\Lambda_s^{A_1}\Lambda_t^{\theta}\xi_1\right)\rho_{t}\Lambda_s^{A_1}\xi_1=a\rho_{at}\Lambda_s^{A_2}\left(\xi_2+A_2\nu\right).$$
		Since the previous equations is true for all $s\in\R$ and $(t, v)\in G$, we can substitute $v$ by $\rme^{-sA_1}\left(v-\Lambda_s^{A_1}\Lambda_t^{\theta}\xi_1\right)$ in order to obtain 
		$$\partial_2g(0, v)\rho_{t}\Lambda_s^{A_1}\xi_1=a\rho_{at}\Lambda_s^{A_2}\left(\xi_2+A_2\nu\right).$$
		Derivation of the previous equation at $s=0$ gives us 	   	   
		$$
		\partial_2g(0, v)\rho_{t}\xi_1=a\rho_{at}\left(\xi_2+A_2\nu\right)\hspace{.5cm}\implies\hspace{.5cm}\partial_2g(0, 0)\xi_1=a\left(\xi_2+A_2\nu\right),
		$$
		and proves the first equality. For the second equality, let us notice that the right-hand side of equation (\ref{uau}) does not depends on $v$. Differentiation on the variable $v$ gives us, by the chain rule, that 
  \begin{equation}
      \label{commuti}
      \partial_2g\left(0, \rme^{sA_1}v+\Lambda_s^{A_1}\Lambda_t^{\theta}\xi_1\right)\rme^{sA_1}=\rme^{sA_2}\partial_2g(0, v).
  \end{equation}
	
		On the other hand, by Step 2. 
		$$\forall (t, v)\in G(\theta), \;\;\; \partial_2g(t, v)=\partial_2g(0, v) \;\;\;\;\mbox{ and }\;\;\;\;f(t, v)=at,$$
		which by the second equation in (\ref{dois}) gives us that
		$$\partial_2g(0, v)\rho_t \eta=\rho_{at}\partial_2g(0, 0)\eta.$$
		Therefore, for any $(t, v)\in G(\theta)$ and $s\in \R$ we get
		$$\partial_2g(0, v)\rho_t\rme^{-sA_1}\eta=\partial_2g(0, v)\rme^{-sA_1}\rho_t\eta\stackrel{(\ref{commuti})}{=}\rme^{-sA_2}\partial_2g\left(0, \rme^{sA_1}v+\Lambda_s^{A_1}\Lambda_t^{\theta}\xi_1\right)\rho_t\eta$$
		$$=\rme^{-sA_2}\rho_{at}\partial_2g(0, 0)\eta=\rho_{at}\rme^{-sA_2}\partial_2g(0, 0)\eta\stackrel{(\ref{commuti})}{=}\rho_{at}\partial_2g(0, 0)\rme^{-sA_1}\eta,$$
		 showing the assertion.

		\bigskip

		{\bf Step 4:} $\psi\in \mathrm{Aut}(G(\theta))$
		
		\bigskip

We have to analyze the following possibilities:
			
			\begin{itemize}
				\item[1.] $\Delta_1$ is not a subalgebra:
				
				In this case, $\eta\neq 0$ considered as previous cannot be  an eigenvector of $\theta$, and so, there exists $t_0\in\R$ is such that $\{\eta, \rho_{t_0}\eta\}$ is a basis of $\R^2$.
From the second equation in (\ref{xi}) we get:
\begin{itemize}
    \item[(i)] For $s=0$,
    $$\forall (t, v)\in G(\theta), \hspace{1cm}\partial_2g(0, v)\rho_t\eta=\rho_{at}\partial_2g(0, 0)\eta;$$

    \item[(ii)] For $s=0$ and all $(t, v)\in G(\theta)$,
    $$\partial_2g(0, v)\rho_t\rho_{t_0}\eta=\partial_2g(0, v)\rho_{t+t_0}\eta=\rho_{a(t+t_0)}\partial_2g(0, 0)\eta=\rho_{at}\rho_{at_0}\partial_2g(0, 0)\eta=\rho_{at}\partial_2g(0, 0)\rho_{t_0}\eta,$$
    where for the last equality we used (ii) for $t=t_0$ and $v=0$.
\end{itemize}

Therefore, the maps $\partial_2g(0, v)\rho_t$ and $\rho_{at}\partial_2g(0, 0)$ coincide on a basis of $\R^2$ and hence
\begin{equation}
    \label{buu}
    \forall (t, v)\in G(\theta), \hspace{1cm}\partial_2g(0, v)\rho_t=\rho_{at}\partial_2g(0, 0).
\end{equation}
As a consequence, for $t=0$ we obtain that 
$$\forall v\in\R^2, \hspace{1cm}\partial_2g(0, v)=\partial_2g(0, 0).$$

Moreover, if $\tr\theta\neq 0$, then 
                $$\det\left(\partial_2g(0, 0)\right)\rme^{t\tr\theta}=\det\left(\partial_2g(0, 0)\circ \rho_t\right)=\det\left(\rho_{at}\circ \partial_2g(0, 0)\right)=\rme^{at\tr\theta}\det\left(\partial_2g(0, 0)\right),$$
                implying that $a=1.$ On the other hand, if $\tr\theta=0$ then necessarily $\det\theta\neq0$. Derivation of equation (\ref{buu}) gives us that  
                $$\partial_2g(0, 0)\circ\theta=a\theta\circ\partial_2g(0, 0),$$
                and hence, 
                $$\det\left(\partial_2g(0, 0)\right)\det\theta=\det\left(\partial_2g(0, 0)\circ\theta\right)=\det\left(a\theta\circ\partial_2g(0, 0)\right)=a^2\det\theta\det\left(\partial_2g(0, 0)\right),$$
                implying that $a\in\{-1, 1\}$.

				\item[2.] $\Delta_1$ is a subalgebra and $\theta\neq\id_{\R^2}$:
				
				In this case, the fact that $A_1\theta=\theta A_1$ implies that $\eta$ is also an eigenvector of $A_1$. Since $\Delta_1=\mathrm{span}\{(1, 0), (0, \eta)\}$, we conclude as in Step 2 that the LARC holds for $\Sigma_1$ if and only if the set $\{\xi_1, \eta\}$ is linearly independent.
				
				Considering $t=s=0$ in the equations in (\ref{xi}) gives us that  	
				$$\partial_2g(0, v)\eta=\partial_2g(0, 0)\eta\hspace{.5cm}\mbox{ and }\hspace{.5cm} \partial_2g(0, v)\xi_1=\partial_2g(0, v)\xi_1,$$
				which by linearity implies that, 
				$$\forall v\in\R^2, \hspace{.5cm}\partial_2g(0, v)=\partial_2g(0, 0)\hspace{.5cm}\mbox{ and }\hspace{.5cm}\partial_2g(0, 0)\circ \rho_t=\rho_{at}\circ \partial_2g(0, 0),$$
    and, as in the previous case, we conclude that $a=1$ if $\tr\theta\neq 0$ and $a\in\{-1, 1\}$ if $\tr\theta=0$.
				
				\item[3.] $\Delta_1$ is a subalgebra and $\theta=\id_{\R^2}$:
				
				In this case, we get from the second equation in (\ref{xi}) that
				$$\rme^{t}\partial_2g(0, v)\eta=\partial_2g(0, v)\rho_t\eta=\rho_{at}\partial_2g(0, 0)\eta=\rme^{at}\partial_2g(0, v)\eta,$$
				implying that $a=1$. If $\{\xi_1, \eta\}$ is linearly independent, we conclude as previously, that 
				$$\forall v\in\R^2,\hspace{1cm}\partial_2g(0, v)=\partial_2g(0, 0).$$
				On the other hand, if $\{\xi_1, \eta\}$ is linearly dependent, the LARC implies necessarily that $\eta$ cannot be an eigenvector of $A_1$. In particular, for some $s_0\in\R$, $\{\eta, \rme^{-s_0A_1}\eta\}$ is a basis of $\R^2$ and as previously we conclude that the linear maps  $\partial_2g(0, v)$ and $\partial_2g(0, 0)$ coincide on such basis, implying that $\partial_2g(0, v)=\partial_2g(0, 0)$ for all $v\in \R^2$.
			\end{itemize}
   
			In any case, we get that 
			$$f(t, v)=\varepsilon t\;\;\;\mbox{ and }\;\;\; g(t, v)=Pv+\varepsilon\Lambda_{t}^{\varepsilon\theta}\nu,$$
			where $\nu=\partial_1g(0, 0)$, $P=\partial_2g(0, 0)$ and $P\theta=\varepsilon\theta P$ with $\varepsilon=1$ if $\tr\theta\neq 0$ or $\varepsilon\in\{-1, 1\}$ if $\tr\theta=0$. Moreover, by definition,  
			$$\Lambda_t^{\varepsilon t}\nu=\int_0^t\rme^{\varepsilon s\theta}\nu ds=\varepsilon\int_0^{\varepsilon t}\rme^{\mu \theta}\nu d\mu=\varepsilon\Lambda^{\theta}_{\varepsilon t}\nu,$$
			implying that $\psi\in\mathrm{Aut}(G(\theta))$ and concluding the proof.
	\end{proof}

	\subsection{Invariance of the nilradical}
	
	In this section we show that for an isometry $\psi\in\mathrm{Iso}_{\theta}(\Sigma_1; \Sigma_2)_0$ between rank two ARSs with $\psi=(f, g)$, the coordinate function $f$ only depends on the first variable of $G(\theta)$ or equivalently, it satisfies $\partial_2f\equiv 0$.

	Let us consider an orthonormal basis $\{X_i, Y_i\}\subset\Delta_i$. Since $\psi_*\Delta_1^L=\Delta_2^L\circ\psi$, we can write uniquely
	$$\psi_*X_1^L=\alpha_1X_2^L(\psi)+\alpha_2Y_2^L(\psi),$$
	where 
	$$\alpha_1^2+\alpha_2^2=\|\psi_*X_1^L\|^2_{\Sigma_2}=\|X_1^L\|^2_{\Sigma_1}=\|X_1\|^2_{\Sigma_1}=1,$$
	and each $\alpha_i$ can be recovered from the orthonormality of the basis as
	$$\alpha_1=\langle \psi_*X_1^L, X_2^L(\psi)\rangle_{\Sigma_2}\hspace{.5cm}\mbox{ and }\hspace{.5cm}\alpha_2=\langle \psi_*X_1^L, Y_2^L(\psi)\rangle_{\Sigma_2},$$
	showing that $\alpha_i:G(\theta)\setminus\ZC_1\rightarrow\R$ are $\CC^{\infty}$ functions. The fact that $\{X_1^L, Y_1^L\}$ is an orthonormal basis of $\Delta^L_1$ and $\psi$ is an isometry allow us to write 
	$$\psi_*Y_1^L=\epsilon\left(-\alpha_2X_2^L(\psi)+\alpha_1Y_2^L(\psi)\right),$$
	where $\epsilon=\pm 1$ is constant on the connected components of $G(\theta)\setminus\ZC_1$. Since all the calculations will be taking into account the connected components of $G(\theta)\setminus\ZC_1$ we will assume that $\epsilon=1$. As one can easily see, such assumption bears no loss of generality and avoids cumbersome notation.
	
	Let us assume w.l.o.g. that   
	$$X_i=(0, \eta_i)\hspace{.5cm}\mbox{ and }\hspace{.5cm} Y_i=(\sigma_i, \nu_i), \hspace{.5cm}\mbox{ with }\hspace{.5cm} \sigma_i\neq 0.$$
	By writting $\psi=(f, g)$ and $\psi_*=\left(\begin{array}{cc}
	\partial_1f & (\partial_2 f)^T\\    \partial_1g & \partial_2g
	\end{array}\right)$, the previous considerations imply that 	
	$$\psi_*(0, \rho_t\eta_1)=\alpha_1(0, \rho_f\eta_2)+\alpha_2(\sigma_2, \rho_f\nu_2)\hspace{.5cm}\mbox{ and }\hspace{.5cm}\psi_*(\sigma_1, \rho_t\nu_1)=-\alpha_2(0, \rho_f\eta_2)+\alpha_1(\sigma_2, \rho_f\nu_2),$$
	or equivalently 
	\begin{equation}
	\label{9}
	\left\{\begin{array}{l}
	\langle\partial_2f,  \rho_t\eta_1\rangle_{\R^2}=\alpha_2\sigma_2\\
	\sigma_1\partial_1f+\langle\partial_2f,\rho_t\nu_1\rangle_{\R^2}=\alpha_1\sigma_2
	\end{array}\right.
	\hspace{.5cm}\mbox{ and }\hspace{.5cm}\left\{\begin{array}{l}
	\partial_2g\rho_t\eta_1=\rho_{f}\left(\alpha_1\eta_2+\alpha_2\nu_2\right)\\
	\sigma_1\partial_1g+\partial_2g\rho_t\nu_1=\rho_{f}\left(-\alpha_2\eta_2+\alpha_1\nu_2\right)
	\end{array}\right..
	\end{equation}
		
	Now, 
	$$\psi\circ\varphi_s^1=\varphi_s^2\circ\psi\hspace{.5cm}\implies\hspace{.5cm} f\circ\varphi_s^1=f,$$
   and consequently, by differentiation,
   \begin{equation}
   \label{eqf}
   \langle\partial_2f,A_1v+\Lambda_t^{\theta}\xi_1\rangle_{\R^2} = 0, \hspace{.5cm} \partial_1f = \partial_1f(\varphi_s^1)+\langle\partial_2f(\varphi^1_s),\rho_t\Lambda_{s}^{A_1}\xi_1\rangle_{\R^2}\hspace{.5cm}\mbox{ and }\hspace{.5cm}\partial_2f(\varphi_s^1)=\rme^{-sA_1^{\intercal}}\partial_2f.
   \end{equation}

   Now, from the equations in (\ref{9}) we obtain that 
	$$\sigma_2^2=(\sigma_2\alpha_2(\varphi_s^1))^2+(\sigma_2\alpha_1(\varphi_s^1))^2=\langle\partial_2f(\varphi_s^1), \rho_t\eta_1\rangle_{\R^2}^2+\left(\sigma_1\partial_1f(\varphi_s^1)+\langle\partial_2f(\varphi_s^1), \rho_t\nu_1\rangle_{\R^2}\right)^2$$
	$$=\langle\rme^{-sA_1^{\intercal}}\partial_2f, \rho_t\eta_1\rangle_{\R^2}^2+\left(\sigma_1\left(\partial_1f-\langle\rme^{-sA_1^{\intercal}}\partial_2f, \rho_t\Lambda_{s}^{A_1}\xi_1\rangle_{\R^2}\right)+\langle\rme^{-sA_1^{\intercal}}\partial_2f, \rho_t\nu_1\rangle_{\R^2}\right)^2$$
	$$=\langle\partial_2f, \rme^{-sA_1}\rho_t\eta_1\rangle_{\R^2}^2+\left(\sigma_1\left(\partial_1f+\langle\partial_2f, \rho_t\Lambda_{-s}^{A_1}\xi_1\rangle_{\R^2}\right)+\langle\partial_2f, \rme^{-sA_1}\rho_t\nu_1\rangle_{\R^2}\right)^2,$$
	where in the second parenthesis we used that $\rme^{-sA_1}\Lambda_{s}^{A_1}=-\Lambda_{-s}^{A_1}$.
	 
	Derivation at $s=0$ gives us
	$$0=2\underbrace{\langle\partial_2f, \rho_t\eta_1\rangle_{\R^2}}_{=\alpha_2\sigma_2}\Bigl(\langle \partial_2f, -A_1\rho_t\eta_1\rangle_{\R^2}\Bigr)+2\underbrace{(\sigma_1\partial_1f+\langle\partial_2f, \rho_t\nu_1\rangle_{\R^2})}_{=\alpha_1\sigma_2}\Bigl(-\sigma_1\langle\partial_2f, \rho_t\xi_1\rangle_{\R^2}$$
	$$+\langle\partial_2f, -A_1\rho_t\nu_1\rangle_{\R^2}\Bigr)=-2\left\langle\partial_2f, \rho_t\left(\sigma_1\sigma_2\alpha_1\xi_1+\sigma_2A_1(\alpha_2\eta_1+\alpha_1\nu_1)\right)\right\rangle_{\R^2},$$
	showing that 
    $$\partial_2f\hspace{.5cm}\mbox{ is orthogonal to }\hspace{.5cm}\rho_t\left(\sigma_1\sigma_2\alpha_1\xi_1+\sigma_2A_1(\alpha_2\eta_1+\alpha_1\nu_1)\right)\;\;\mbox{ on }\;\;G(\theta)\setminus\ZC_1.$$
     Moreover, by the first equation in (\ref{eqf}), we get that, on $G(\theta)\setminus\ZC_1$,
     \begin{equation}
    \label{LD}
    \partial_2f\neq 0\implies\hspace{.5cm} \{A_1v+\Lambda_t^{\theta}\xi_1, \rho_t(\sigma_1\sigma_2\alpha_1\xi_1+\sigma_2A_1(\alpha_2\eta_1+\alpha_1\nu_1)\}\hspace{.5cm} \mbox{ is linearly dependent}.
    \end{equation}
    
    We show that $\partial_2f\equiv0$ by analyzing the possibilities for the eigenvalues of $A_1$ in the next propositions.
   	
   	
\begin{proposition}
	\label{prop1}
	If $A_1$ has only eigenvalues with nonzero real parts, $\partial_2f\equiv 0$.
\end{proposition}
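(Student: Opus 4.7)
My plan is to argue by contradiction, combining the flow invariance $f\circ\varphi_s^1=f$ with the linear dependence relation (\ref{LD}). Differentiating $f\circ\varphi_s^1=f$ with respect to $v$ yields the transformation rule
$$\partial_2 f(\varphi_s^1(t,v))=\partial_2 f(t,v)\cdot e^{-sA_1},$$
so that $\{\partial_2 f\neq 0\}$ is invariant under $\varphi_s^1$. Since the eigenvalues of $A_1$ have nonzero real parts, $A_1$ is invertible and $\varphi_s^1$ has a unique equilibrium $v^*(t)=-A_1^{-1}\Lambda_t^\theta\xi_1$ in every slice $\{t\}\times\R^2$. Assume for contradiction that $\partial_2 f(t_0,v_0)\neq 0$; by invariance, $\partial_2 f$ is nonzero along the entire orbit of $(t_0,v_0)$.

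When the two eigenvalues of $A_1$ have real parts of the same sign (the complex conjugate case included), every orbit in the slice $\{t_0\}\times\R^2$ converges to $(t_0,v^*(t_0))$ as $s\to\pm\infty$ for a suitable choice of sign. Since $\psi$ is a global $\mathcal{C}^\infty$ diffeomorphism, $\partial_2 f$ is continuous at $(t_0,v^*(t_0))$, so $\partial_2 f(\varphi_s^1(t_0,v_0))$ stays bounded in that limit. But in that same limit the dominant eigenvalue of $e^{-sA_1}$ has modulus $e^{|s|\cdot|\mathrm{Re}\,\lambda|}\to\infty$, so $|\partial_2 f(t_0,v_0)\cdot e^{-sA_1}|\to\infty$ unless $\partial_2 f(t_0,v_0)=0$, a contradiction.

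The remaining and hardest case is that $A_1$ has two real eigenvalues $\lambda_+>0>\lambda_-$ of opposite signs, with eigenvectors $e_\pm$. Only orbits on the stable manifold $W^s(v^*(t))=v^*(t)+\R e_-$ and on the unstable manifold $W^u(v^*(t))=v^*(t)+\R e_+$ now accumulate on $v^*(t)$, as $s\to+\infty$ and $s\to-\infty$ respectively. The previous continuity/blow-up argument applied only to these invariant lines yields the partial information
$$\partial_2 f(t,v)\in\R e_+ \text{ on }W^s(v^*(t)),\qquad \partial_2 f(t,v)\in\R e_- \text{ on }W^u(v^*(t)),$$
because generic orbits escape to infinity and $\partial_2 f$ is a priori allowed to grow there. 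To close the argument, I would couple this with (\ref{LD}): writing $U(t,v)=\alpha_1 W_1(t)+\alpha_2 W_2(t)$ with $W_1(t):=\rho_t\sigma_2(\sigma_1\xi_1+A_1\nu_1)$ and $W_2(t):=\rho_t\sigma_2 A_1\eta_1$, (\ref{LD}) forces $U\parallel A_1(v-v^*(t))$ on the open set $\{\partial_2 f\neq 0\}$. If $W_1,W_2$ are linearly independent, then $\alpha_1^2+\alpha_2^2=1$ gives a uniform positive lower bound on $|U|$, which matched against the unbounded growth of $|A_1(v(s)-v^*(t))|$ along a generic orbit, together with the constraints on $W^s,W^u$ above, should produce a contradiction. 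If instead $W_1\parallel W_2$ (a configuration further restricted by the LARC), then $U$ is confined to a line and (\ref{LD}) pins $\{\partial_2 f\neq 0\}$ inside a codimension-at-least-one closed subset of the three-dimensional $G(\theta)$, contradicting its openness. The main obstacle is precisely this saddle subcase: the flow argument alone fails, so the linear-dependence relation (\ref{LD}) and the LARC must do the remaining work.
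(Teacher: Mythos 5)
Your proposal settles only the case in which both eigenvalues of $A_1$ have real parts of the same sign, and you explicitly concede that the saddle case $\lambda_+>0>\lambda_-$ is left open: ``should produce a contradiction'' and ``must do the remaining work'' are not a proof, and the partial information you extract on the stable and unstable lines is never actually combined with (\ref{LD}) into a contradiction. This is a genuine gap, and it occurs exactly where your dynamical argument has no traction, since generic orbits escape to infinity and continuity of $\partial_2f$ at the equilibrium then gives no constraint.

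The ingredient you are missing is that the isometry condition itself supplies a uniform bound that works in every spectral configuration at once. After normalizing $(1,0)\in\Delta_1$ via Proposition \ref{difeo}, one has $\nu_1=c\eta_1$ and the first equation of (\ref{9}) becomes $\langle\partial_2f,\rho_t\eta_1\rangle_{\R^2}=\alpha_2\sigma_2$, which is bounded by $|\sigma_2|$ everywhere because $\alpha_1^2+\alpha_2^2=1$. Composing with the flow and using $\partial_2f(\varphi^1_s)=\rme^{-sA_1^{\intercal}}\partial_2f$, the function $s\mapsto\langle\partial_2f,\rme^{-sA_1}\rho_t\eta_1\rangle_{\R^2}$ is a bounded exponential polynomial all of whose exponents have nonzero real part, hence it vanishes identically; this disposes of the saddle case exactly as easily as the node or focus case. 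One gets $\alpha_2\equiv0$, so $\partial_1f=\sigma_2/\sigma_1$ is constant off $\ZC_1$, and one must still convert this into $\partial_2f\equiv 0$: integrate to write $f(t,v)=at+h(v)$ and run the Step 2 argument of Lemma \ref{fundamentallemma} (orthogonality of $\nabla h$ to $\eta-\rho_t\eta$, to $\eta-\rme^{-sA_1}\eta$ and to $\xi_1$, which contradicts the LARC unless $\nabla h\equiv0$). Your same-sign branch happens to reach $\partial_2f=0$ directly and skips this last step, but any repaired saddle argument would need it. Note also that your blow-up argument leans on continuity of $\partial_2f$ at the equilibrium $v^*(t)\in\ZC_{\XC_1}\subset\ZC_1$, i.e., at the singular locus; this is defensible because $\psi$ is a diffeomorphism, but the boundedness argument above avoids evaluating the differential there at all.
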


\begin{proof}
	As in the proof of Lemma \ref{fundamentallemma}, we can use Proposition \ref{difeo} and assume w.l.o.g. that $(1, 0)\in\Delta_1$. Under this assumption, we have that $\nu_1=c\eta_1$ and equations (\ref{9}) become 
	$$\langle\partial_2f, \rho_t\eta_1\rangle_{\R^2}=\alpha_2\sigma_2\hspace{.5cm}\mbox{ and }\hspace{.5cm}\partial_1f= \frac{\sigma_2}{\sigma_1}(\alpha_1-c\alpha_2).$$
	Therefore,
	$$\alpha_2(\varphi_s^1)\sigma_2=\langle\partial_2f(\varphi_s^1), \rho_t\eta_1\rangle_{\R^2}\stackrel{(\ref{eqf})}{=}\langle\rme^{-sA_1^{\intercal}}\partial_2f, \rho_t\eta_1\rangle_{\R^2}$$
	is bounded for $s\in\R$. However, the assumption that $A_1$ has only eigenvalues with nonzero real parts and $\eta_1\neq 0$ implies that $\alpha_2\equiv 0$,  and hence, $\alpha_1=1$ showing that 
	$\partial_1f$ is constant on $G\setminus\ZC_1$.
	Using the continuity of $f$ and the fact that $G(\theta)\setminus\ZC_1$ is an open and dense subset of $G(\theta)$ allow us to conclude that  
	$$\forall (t, v)\in G(\theta), \;\;\;\;f(t, v)=at+h(v), \;\;\mbox{ where }\;\; a=\sigma_2/\sigma_1\;\;\mbox{ and }\;\;h(v)=f(0, v).$$
	By doing the same analysis as the one in Step 2 of Lemma \ref{fundamentallemma}, allows us to conclude, by the LARC, that $h\equiv 0$, implying that $\partial_2f\equiv 0$ as stated.	 
\end{proof}

\begin{proposition}
If $A_1$ has a pair of pure imaginary eigenvalues, $\partial_2f\equiv 0$.	
\end{proposition}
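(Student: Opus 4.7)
The plan is to mimic the strategy of Proposition~\ref{prop1}. Since $A_1$ now has pure-imaginary eigenvalues, $\rme^{-sA_1^\intercal}$ is a bounded periodic rotation, so $\alpha_2(\varphi_s^1)$ is automatically bounded and Proposition~\ref{prop1}'s boundedness argument no longer forces $\alpha_2\equiv 0$. Instead, I would extract information from the isometry identity $\alpha_1^2+\alpha_2^2=1$ evaluated along an entire orbit of $\varphi_s^1$, by a Fourier expansion in the frequency of $\rme^{sA_1}$.

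First, by Proposition~\ref{difeo}, reduce WLOG to $(1,0)\in\Delta_1$, so $\sigma_1=1$ and $\nu_1=c\eta_1$. Changing basis in $\R^2$, one may take $A_1=\omega J$ with $J=\left(\begin{smallmatrix}0 & -1\\ 1 & 0\end{smallmatrix}\right)$ and $\omega\ne 0$; the commutation $A_1\theta=\theta A_1$ then restricts $\theta$ to either $I$ or $\gamma I+J$ with $\gamma\in\R$. Using (\ref{9}), (\ref{eqf}), and $\rme^{-sA_1}=\cos(s\omega)I-\sin(s\omega)J$, one writes
\begin{equation*}
\alpha_2(\varphi_s^1)\sigma_2 = \cos(s\omega)P - \sin(s\omega)Q,\qquad \alpha_1(\varphi_s^1)\sigma_2 = C_0 + \cos(s\omega)B_c + \sin(s\omega)B_s,
\end{equation*}
where $P=\langle\partial_2 f,\rho_t\eta_1\rangle$, $Q=\langle\partial_2 f,\rho_tJ\eta_1\rangle$, $C_0 = \partial_1 f - \langle\partial_2 f,\rho_t A_1^{-1}\xi_1\rangle$, and $B_c, B_s$ are explicit linear combinations of $P,Q,\ R=\langle\partial_2 f,\rho_t\xi_1\rangle$, $S=\langle\partial_2 f,\rho_tJ\xi_1\rangle$.

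Plugging these expressions into $(\alpha_1(\varphi_s^1)\sigma_2)^2+(\alpha_2(\varphi_s^1)\sigma_2)^2=\sigma_2^2$ and expanding in the Fourier basis $\{1,\cos(s\omega),\sin(s\omega),\cos(2s\omega),\sin(2s\omega)\}$, equating coefficients yields five algebraic relations. The first-harmonic equations force $C_0=0$ (otherwise $P=Q=0$, which forces $\partial_2 f\equiv 0$ since $\{\rho_t\eta_1,\rho_tJ\eta_1\}$ is a basis of $\R^2$). The second-harmonic equations, combined with the identity $(\sigma_2\alpha_1)^2+(\sigma_2\alpha_2)^2=\sigma_2^2$, yield $B_s^2+Q^2=\sigma_2^2$ and $B_c B_s=PQ$. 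Writing $\xi_1=\alpha_0\eta_1+\beta_0 J\eta_1$ and using the identities $R=\alpha_0 P+\beta_0 Q$, $S=\alpha_0 Q-\beta_0 P$ (which hold in both forms of $\theta$), these relations collapse to a homogeneous $2\times 2$ linear system in $(\alpha_1,\alpha_2)$ whose determinant equals $-\varepsilon\bigl[(\beta_0+\omega c)^2+(\alpha_0+\varepsilon\omega)^2\bigr]$ for a sign $\varepsilon\in\{\pm 1\}$. As $\alpha_1^2+\alpha_2^2=1$ rules out the trivial solution, the determinant must vanish, forcing $\xi_1=(-\varepsilon\omega,-\omega c)$; off this exceptional locus, $\partial_2 f\equiv 0$.

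The main obstacle is the exceptional locus, where the algebraic system is consistent and the preceding analysis determines $\partial_2 f$ explicitly in terms of $(\alpha_1,\alpha_2)$, while (\ref{LD}) forces $(\alpha_1,\alpha_2)$ to be the unit-length normalization of $(\rme^t-1-v_2,\,v_1-c(\rme^t-1))$ (in the case $\theta=I$; the case $\theta=\gamma I+J$ is analogous with $\rme^{-\gamma t}$ in the relevant formulas). I would close this sub-case by a direct check of the integrability condition $\partial_{v_1}\partial_1 f=\partial_t(\partial_2 f)_1$: a short computation shows it produces a surplus term $\sigma_2\rme^{-t}\alpha_2$ which cannot vanish on any open set where $\alpha_2\ne 0$, yielding the required contradiction and forcing $\partial_2 f\equiv 0$ in every sub-case.
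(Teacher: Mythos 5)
Your route is genuinely different from the paper's. The paper exploits $\det A_1\neq 0$ to normalize $\xi_1=0$ (instead of $(1,0)\in\Delta_1$; the two normalizations are mutually exclusive), which makes the singular locus $\ZC_1=\{(t,s\rho_tR\eta_1)\}$ and the circular $\varphi^1_s$-orbits completely explicit. It then studies the $\varphi^1_s$-invariant set $\AC=\{\partial_2f\neq 0\}$, rotates its points along orbits to positions where (\ref{LD}), (\ref{9}) and (\ref{eqf}) force $\alpha_1$ or $\alpha_2$ to vanish, and concludes by contradicting the fact that $\psi$ is a diffeomorphism; no exceptional value of $\xi_1$ ever arises. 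Your idea of extracting the full Fourier expansion of $\alpha_1(\varphi^1_s)^2+\alpha_2(\varphi^1_s)^2=1$ along an orbit (the paper only takes the derivative at $s=0$, which is how (\ref{LD}) is obtained) is a legitimate strengthening: the orbit meets $\ZC_1$ discretely by Theorem \ref{properties}, the relevant quantities are trigonometric polynomials in $s$, and the coefficient identities do force $P=Q=0$, hence $\partial_2f=0$ pointwise, away from your exceptional locus. (Two small points you should make explicit: passing from $B_s^2+Q^2=\sigma_2^2(\alpha_1^2+\alpha_2^2)$ and $B_cB_s=PQ$ to a \emph{linear} system requires arguing that $(B_s,Q)=\pm\sigma_2(\alpha_2,\alpha_1)$ with a locally constant sign; and the first-harmonic step needs the degenerate case $\alpha_0=0$, $\beta_0=-\omega c$, where $B_c=B_s\equiv 0$ and the dichotomy ``$C_0=0$ or $P=Q=0$'' collapses, to be treated separately.)

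The genuine gap is the exceptional sub-case $\xi_1=(-\varepsilon\omega,-\omega c)$, which your algebra cannot exclude and which is therefore exactly where the burden of proof sits. Everything there rests on the claim that the equality of mixed partials of $f$ ``produces a surplus term $\sigma_2\rme^{-t}\alpha_2$ which cannot vanish on any open set where $\alpha_2\neq 0$.'' That computation is not exhibited, it is the single decisive step of the hardest case, and as stated it is incomplete even if the surplus term is as claimed: you would still have to rule out the possibility that $\alpha_2$ vanishes on an open set (on which your formulas degenerate and $(\alpha_1,\alpha_2)$ is no longer pinned down by (\ref{LD})), and you would have to carry out the ``analogous'' verification for $\theta=\gamma I+J$, where the extra rotation in $\rho_t$ changes the expression whose integrability you are testing. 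Until that computation is written down and these sub-sub-cases are closed, the proposition is not proved on the exceptional locus. A cleaner fix is available: since $\det A_1\neq 0$, adopt the paper's normalization $\xi_1=0$ at the outset; then $C_0=\partial_1f$, $B_c=cP$, $B_s=-cQ$, and your own Fourier identities already force $P=Q=0$ everywhere, with no exceptional locus to handle.
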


\begin{proof}
Since $\det A_1\neq 0$, the automorphism, 
 $\hat{\psi}(t, v)=\left(t, v-\Lambda_t^{\theta}(A^{-1}\xi_1)\right),$ is well defined. Moreover, by Proposition (\ref{difeo}) it is an isometry between $\Sigma_1$ and an ARS $\hat{\Sigma}_1$ whose associated linear vector field is $\hat{\XC}_1=(0, A_1)$. Therefore, we will assume w.l.o.g. that $\xi_1=0$.
 
 Under this assumption, let us define the set 
	$$\AC:=\{(t, v)\in G(\theta); \;\;\partial_2f(t, v)\neq 0\}.$$
	The first to notice is that 
	$$\partial_2f(\varphi_s^1)=\rme^{-sA^{\intercal}}\partial_2f\hspace{.5cm}\implies\hspace{.5cm}\varphi_s^1(\AC)\subset\AC.$$
	Also, the fact that $A_1$ has a pair of imaginary eigenvalues and that $A_1\theta=\theta A_1$ imply that, on the canonical basis, $A_1=\mu R$, where $R=\left(\begin{array}{cc}
	0 & -1\\ 1 & 0
	\end{array}\right)$ and $\mu\neq 0$.
	
	Since we are assuming $\xi_1=0$, the singular locus $\ZC_1$ and the set of singularities $\ZC_{\XC_1}$ are given explicitly as 
	$$\ZC_1=\{(t, s\rho_tR\eta_1), \;\;t, s\in\R\}\hspace{.5cm}\mbox{ and }\hspace{.5cm}\ZC_{\XC_1}=\R\times\{0\}.$$ 
	Consequently, for any $(t, v)\in\ZC_1\setminus\ZC_{\XC_1}$ there exists $s_0\in\R$ such that $\varphi_{s_0}(t, v)\in G(\theta)\setminus\ZC_1$. In particular we get that  
	$$\AC\setminus\ZC_{\XC_1}\neq\emptyset\hspace{.5cm}\implies \hspace{.5cm} \AC\cap \left(G(\theta)\setminus\ZC_1\right)\neq\emptyset.$$ 
 
	Let us assume the existence of $(t_0, v_0)\in \AC\cap \left(G(\theta)\setminus\ZC_1\right)$. From relation (\ref{LD}) we have that 
	$$\alpha_2(t_0, v_0)\rho_{t_0}\eta_1+\alpha_1(t_0, v_0)\rho_{t_0}\nu_1\in\R v_0,$$ 
 or equivalently, 
 \begin{equation}
 \label{relation}
  \alpha_2(t_0, v_0)\langle\rho_{t_0}\eta_1, Rv_0\rangle_{\R^2}+\alpha_1(t_0, v_0)\langle\rho_{t_0}\nu_1, Rv_0\rangle_{\R^2}=0.   
 \end{equation}

Now, if $\nu_1=c\eta_1$, equation (\ref{relation}) implies that 
$$\bigl(\alpha_2(t_0, v_0)+c\alpha_1(t_0, v_0)\bigr)\langle\rho_{t_0}\eta_1, Rv_0\rangle=0.$$
By a simple calculation, we get that 
$$\hat{\ZC}_1=\{(t, s\rho_t\eta_1), \;t, s\in\R\},$$
is the set of points where $\langle\rho_{t_0}\eta_1, Rv_0\rangle=0$, and hence
$$\alpha_1^2(t_0, v_0)=\frac{1}{1+c^2}\hspace{.5cm}\mbox{ and }\hspace{.5cm}\alpha_2^2(t_0, v_0)=\frac{c^2}{1+c^2}, \hspace{.5cm}\forall (t_0, v_0)\in\AC\cap\left(G(\theta)\setminus(\ZC_1\cup\hat{\ZC}_1)\right),$$
where we used that $\alpha_1^2+\alpha_2^2=1$. As in Proposition \ref{prop1} the assumption $\nu_1=c\eta_1$ gives us
$$\langle\partial_2f, \rho_t\eta_1\rangle_{\R^2}=\alpha_2\sigma_2\hspace{.5cm}\mbox{ and }\hspace{.5cm}\partial_1f= \frac{\sigma_2}{\sigma_1}(\alpha_1-c\alpha_2),$$
implying, in particular, that $\partial_1f$ is constant on each connected component of $G(\theta)\setminus(\ZC_1\cup\hat{\ZC}_1)$. Using the continuity of $f$ and the fact that $G(\theta)\setminus(\ZC_1\cup\hat{\ZC}_1)$ is an open and dense subset of $G(\theta)$ allow us to conclude that  $$\forall (t, v)\in G(\theta), \hspace{.5cm}f(t, v)=at+h(v), \hspace{.5cm}\mbox{ where }\hspace{.5cm}h(v)=f(0, v).$$
	By doing the same analysis as the one in Step 2 of Lemma \ref{fundamentallemma}, we are able to conclude, by the LARC, that $h\equiv 0$, which gives us $\partial_2f\equiv 0$.

On the other hand, if $\{\eta_1, \nu_1\}$ is a linearly independent set, then $\langle\rho_{t_0}\eta_1, Rv_0\rangle_{\R^2}^2+\langle\rho_{t_0}\nu_1, Rv_0\rangle_{\R^2}^2\neq 0$ on $G(\theta)$. As a consequence, relation (\ref{relation}) implies that for all $(t_0, v_0)\in \AC\cap (G(\theta)\setminus\ZC_1)$,
	\begin{equation}
		\label{alphais}
	\alpha_1(t_0, v_0)^2=\frac{\langle\rho_{t_0}\eta_1, Rv_0\rangle_{\R^2}^2}{\langle\rho_{t_0}\eta_1, Rv_0\rangle_{\R^2}^2+\langle\rho_{t_0}\nu_1, Rv_0\rangle_{\R^2}^2}\hspace{.5cm}\mbox{ and }\end{equation}
	$$\alpha_2(t_0, v_0)^2=\frac{\langle\rho_{t_0}\nu_1, Rv_0\rangle_{\R^2}^2}{\langle\rho_{t_0}\eta_1, Rv_0\rangle_{\R^2}^2+\langle\rho_{t_0}\nu_1, Rv_0\rangle_{\R^2}^2}.$$
    Next we show that $\AC\subset\ZC_{\XC_1}$, analyzing two possibilities:
	\begin{enumerate}
		\item $\eta_1$ and $\nu_1$ are not orthogonal vectors;
		
		Let $(t, v)\in\AC\setminus\ZC_{\XC_1}$. The fact that $\rme^{sA_1}v$ is a circumference around the origin gives us that 
		$$\{\rme^{sA_1}v, \;\;s\in\R\}\cap\{s\rho_t\nu_1, \;s\in\R\}\neq\emptyset\hspace{.5cm}\implies\hspace{.5cm}\exists s_0, s_1\in\R^*; \;\;\rme^{s_0A_1}v=s_1\rho_t\nu_1.$$
		In particular, if $\eta_1$ and $\nu_1$ are not orthogonal vectors, $A_1\nu_1\notin\R \eta_1$ implying that $(t, s_1\rho_t\nu_1)\in G(\theta)\setminus\ZC_1$ and hence 
		$$\AC\ni\varphi^1_{s_0}(t, v)=(t, \rme^{s_0A_1}v)=(t, s_1\rho_t\nu_1)\in G(\theta)\setminus\ZC_1.$$
		Formula (\ref{alphais}) with $(t_0, v_0)=(t, s_1\rho_t\nu_1)$ implies
		$$\langle\rho_t\nu_1, Rv_0\rangle_{\R^2}=s_1\langle\rho_t\nu_1, R\rho_t\nu_1\rangle_{\R^2}=0\hspace{.5cm}\implies\hspace{.5cm}\alpha_2(t,  s_1\rho_t\nu_1)=0\;\;\stackrel{(\ref{9})}{\implies}\;\;\;\langle\partial_2f(t, s_1\rho_t\nu_1), \rho_t\eta_1\rangle_{\R^2}=0.$$
		On the other hand, by the first equation in (\ref{eqf}) we get that 
		$$\langle\partial_2f(t, s_1\rho_t\nu_1), A_1(s_1\rho_t\nu_1)\rangle_{\R^2}=0\hspace{.5cm}\implies\hspace{.5cm}\langle\partial_2f(t, s_1\rho_t\nu_1), \rho_tA_1\nu_1\rangle_{\R^2}=0.$$
		Since  $A_1\nu_1\notin\R \eta_1$, the set $\{\rho_t \eta_1, \rho_tA_1\nu_1\}$ is a basis of $\R^2$, which by the previous equalities imply $\partial_2f(t, s_1\rho_t\nu_1)=0$, and hence
  $$\partial_2f(t, v)\stackrel{(\ref{eqf})}{=}\rme^{-s_0A_1^{\intercal}}\partial_2f(\varphi^1_{s_0}(t, v))=\rme^{-s_0A_1^{\intercal}}\underbrace{\partial_2f(t, s_1\rho_t\nu_1)}_{=0}=0,$$
  contradicting the fact that $(t, v)\in\AC\setminus\ZC_{\XC_1}$. Therefore, if  $\eta_1$ and $\nu_1$ are not orthogonal vectors we must have that $\AC\setminus\ZC_{\XC_1}=\emptyset$.
		
		\item $\eta_1$ and $\nu_1$ are orthogonal vectors;
		
		Let $(t, v)\in\AC\setminus\ZC_{\XC_1}$. As previously, 
		$$\exists s_0, s_1\in\R^*; \;\;\;\;\AC\ni\varphi_{s_0}^1(t, v)=(t, s_1\rho_t\eta_1)\in G(\theta)\setminus\ZC_1,$$
  and, by using (\ref{alphais}) with $(t_0, v_0)=(t, s_1\rho_t\eta_1)$, allows us to obtain that $\alpha_1(t, s_1\rho_t\eta_1)=0.$
		Since $\eta_1$ and $\nu_1$ being orthogonal  is equivalent to $A_1\eta_1\in\R \nu_1$, the first equation in (\ref{eqf}) gives us that
		$$\langle\partial_2f(t, s_1\rho_t\eta_1), A_1(s_1\rho_t\eta_1)\rangle_{\R^2}=0\hspace{.5cm}\implies\hspace{.5cm}\langle\partial_2f(t, s_1\rho_t\eta_1), \rho_t\nu_1\rangle_{\R^2}=0,$$
		and hence, 
		$$\sigma_1\partial_1f(t, s_1\rho_t\eta_1)=\sigma_1\partial_1f(t, s_1\rho_t\eta_1)+\underbrace{\langle\partial_2f(t, s_1\rho_t\eta_1), \rho_t\nu_1\rangle_{\R^2}}_{=0}\stackrel{(\ref{9})}{=}\alpha_1(t, s_1\rho_t\eta_1)\sigma_2=0.$$
		Since we are assuming $\xi_1=0$, the second equation in (\ref{eqf}) implies that $\partial_1f(\varphi^1_s)=\partial_1f$ for all $s\in\R$. In particular, 
		$$\partial_1f(t, v)=\partial_1f(\varphi^1_{s_0}(t, v))=\partial_1f(t, s_1\rho_t\eta_1)=0,$$ 
		and by the arbitrariness of $(t, v)\in\AC\setminus\ZC_{\XC_1}$ we conclude that 
		$$\AC\setminus\ZC_{\XC_1}\subset \{(t, v)\in G(\theta)\setminus\ZC_{\XC_1}; \;\; \partial_1f(t, v)=0\}.$$
		Now, if $(t, v)\in (\overline{\AC}\setminus\AC)\cap \left(G(\theta)\setminus\ZC_1\right)$, the fact that $f$ restrict to $G(\theta)\setminus\ZC_1$ is $\CC^{\infty}$ implies that
		$$\partial_1f(t, v)=0\hspace{.5cm}\mbox{ and }\hspace{.5cm}\partial_2f(t, v)=0,$$
		which is a contradition to the fact that $\psi$ is a diffeomorphism on $G\setminus\ZC_1$. Therefore, $\AC\cap(G(\theta)\setminus\ZC_1)$ is open and closed in $G(\theta)\setminus\ZC_1$ and so, $\AC$ contains any connected component of $G(\theta)\setminus\ZC_1$ that it intersects. 

  On the other hand, for any $(t, v)\in G(\theta)$ with $v\neq 0$, the curve $\{\varphi_s(t, v)=(t, \rme^{sA_1}), s\in\R\}$ is a circumference on the plane $\{t\}\times\R^2$ with center $(t, 0)$ and radius $|v|_{\R^2}$. Since the intersection 
  $\left(\{t\}\times\R^2\right)\cap\ZC_1$ is the line $\{(t, s\rho_tR\eta_1), s\in\R\}$,
  any point $(t, v)\in G(\theta)\setminus\ZC_1$ crosses between the two connected components of $G(\theta)\setminus\ZC_1$. By the invariance $\varphi_s(\AC)\subset\AC$ we get 

		$$\AC\setminus\ZC_{\XC_1}\neq\emptyset\hspace{.5cm}\implies\hspace{.5cm}\AC\cap (G(\theta)\setminus\ZC_1)\neq\emptyset\hspace{.5cm}\implies\hspace{.5cm}G(\theta)\setminus\ZC_1\subset\AC.$$
		From that, we conclude that $\partial_1f|_{G(\theta)\setminus\ZC_1}\equiv 0$ and by the continuity of $f$ we get actually 
		$$f(t, v)=f(0, v)\hspace{.5cm}\implies\hspace{.5cm}\partial_1f\equiv 0.$$
		However, since $A_10\mu R$ with $\mu\neq 0$, the relation
		$$\partial_2f(\varphi^1_s)=\rme^{-sA^{\intercal}}\partial_2f\hspace{.5cm}\implies\hspace{.5cm}\partial_2f(0, 0)=0,$$ 
		which together with $\partial_1f\equiv 0$  contradicts the fact that $\psi$ is a diffeomorphism. 	
	\end{enumerate}

    Therefore, in both cases, we conclude that $\AC\setminus\ZC_{\XC_1}=\emptyset$ or equivalently $\AC\subset\ZC_{\XC_1}$. Therefore, 
    $$\partial_2f|_{G(\theta)\setminus\ZC_{\XC_1}}\equiv 0,$$
    which by continuity implies 
    $$f(t, v)=at, \hspace{.5cm}\mbox{ where }\hspace{.5cm}a=\sigma_2/\sigma_1\in\R^*,$$ implying that $\partial_2f\equiv 0$ and concluding the proof.
\end{proof}

\begin{proposition}
	If $\R^2=\mathrm{Im}A_1\oplus\R\xi_1$ then $\partial_2f\equiv 0$.
\end{proposition}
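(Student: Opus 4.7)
The condition $\R^2 = \mathrm{Im}(A_1)\oplus\R\xi_1$ forces $\mathrm{rank}(A_1) = 1$ together with $\xi_1\notin\mathrm{Im}(A_1)$, which is precisely the case left uncovered by the two previous propositions. Since $A_1\theta = \theta A_1$, the line $\mathrm{Im}(A_1)$ is a one-dimensional $\theta$-invariant subspace, hence a real eigenspace of $\theta$; by inspecting the normal forms of $\theta$ this is always achievable. My plan is to follow the template of the two preceding propositions: first apply Proposition \ref{difeo} to normalize the data (for instance, placing $\xi_1$ along a convenient transversal to $\mathrm{Im}(A_1)$, as was done with $\hat{\psi}$ in the pure imaginary case), then analyze $\partial_2 f$ along the orbits of the linear flow.

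The central tool is the cocycle identity $\partial_2 f(\varphi_s^1) = \rme^{-sA_1^\intercal}\partial_2 f$ from (\ref{eqf}). Combined with $|\alpha_2|\le 1$ and the first equation of (\ref{9}), it yields
$$ |\langle \partial_2 f(t,v),\, \rme^{-sA_1}\rho_t\eta_1\rangle_{\R^2}| \le \sigma_2, \qquad \forall s\in\R. $$
Since $A_1$ has rank one, it is either diagonalizable with real spectrum $\{0,\lambda\}$, $\lambda\in\R^*$, in which case $\rme^{-sA_1}$ grows exponentially along the $\lambda$-eigendirection, or nilpotent with $A_1^2 = 0$, in which case $\rme^{-sA_1} = \id_{\R^2} + sA_1$ grows linearly. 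In either scenario, boundedness in $s$ forces $\partial_2 f(t,v)$ to annihilate the unbounded component of $\rme^{-sA_1}\rho_t\eta_1$—giving $\langle \partial_2 f, A_1\rho_t\eta_1\rangle_{\R^2} = 0$ in the nilpotent case and vanishing of the $\lambda$-component of the pairing in the diagonalizable case. An analogous argument using $|\alpha_1|\le 1$ together with the second equation of (\ref{9}) and the identity for $\partial_1 f$ in (\ref{eqf}) yields a companion orthogonality constraint involving $\nu_1$ and $\xi_1$. Coupling these with the pointwise relation $\langle \partial_2 f,\, A_1 v + \Lambda_t^\theta\xi_1\rangle_{\R^2} = 0$ and the direct-sum hypothesis $\xi_1\notin\mathrm{Im}(A_1)$ should force $\partial_2 f$ to vanish on a dense open subset of $G(\theta)\setminus\ZC_1$.

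The main obstacle will be the exceptional configurations in which $\eta_1$ (or $\nu_1$) happens to align with an eigenvector of $A_1$ or of $\theta$: there the boundedness estimate degenerates and the orthogonality conclusion collapses. These are exactly the kind of borderline alignments that required separate treatment in Step 4 of the fundamental lemma and in the orthogonal-vectors subcase of the previous proposition. In those configurations I would fall back on the LD condition (\ref{LD}), combined with the LARC (either $\Delta_1$ is not a subalgebra, or $\DC_1\Delta_1\not\subset\Delta_1$) and the invariance $\varphi_s^1(\AC)\subset \AC$, to rule out $\AC \cap (G(\theta)\setminus\ZC_1)\neq\emptyset$; the argument will mirror the case split $\{\eta_1,\nu_1\}$ dependent/orthogonal/independent that appeared earlier. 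Once $\partial_2 f\equiv 0$ on the open dense set $G(\theta)\setminus\ZC_1$, continuity of $f$ extends the vanishing to all of $G(\theta)$, completing the proof.
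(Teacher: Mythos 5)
Your overall strategy --- extracting boundedness-in-$s$ statements from (\ref{9}) and (\ref{eqf}) along the linear flow and letting the growth of the transported vectors kill $\partial_2 f$ --- is the same mechanism the paper uses, and your list of ingredients (normalization via Proposition \ref{difeo}, the rank-one dichotomy diagonalizable/nilpotent, the pointwise relation $\langle\partial_2 f, A_1v+\Lambda_t^\theta\xi_1\rangle_{\R^2}=0$, continuity at the end) is the right one. But the outline has its emphasis inverted, and the place where you hedge is exactly where the argument must not degenerate. Your ``central tool,'' the boundedness of $\langle\partial_2 f,\rme^{-sA_1}\rho_t\eta_1\rangle_{\R^2}$, yields at most one orthogonality relation and yields nothing at all when $\eta_1$ lies in the kernel of $A_1$ --- not an exotic configuration for a rank-one $A_1$, and not one you can normalize away. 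The estimate that the hypothesis $\R^2=\mathrm{Im}A_1\oplus\R\xi_1$ actually powers is the one you relegate to a ``companion'': after using Proposition \ref{difeo} to put $(1,0)\in\Delta_1$, one has $\nu_1=c\eta_1$, so (\ref{9}) gives $\partial_1 f=\frac{\sigma_2}{\sigma_1}(\alpha_1-c\alpha_2)$, which is bounded; the second identity in (\ref{eqf}) then shows that $\langle\partial_2 f,\rho_t\Lambda_{-s}^{A_1}\xi_1\rangle_{\R^2}=\partial_1f(\varphi_s^1)-\partial_1f$ is bounded in $s$, and since the component of $\xi_1$ transverse to $\mathrm{Im}A_1$ is nonzero, $\Lambda_{-s}^{A_1}\xi_1$ grows linearly (quadratically in the nilpotent case) in that transverse direction regardless of where $\eta_1$ sits. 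Matching growth rates kills one component of $\partial_2 f$ outright, and in the only residual subcase ($\xi_1\in\ker A_1$ for diagonalizable $A_1$) the pointwise relation $\langle\partial_2 f,A_1v+\Lambda_t^\theta\xi_1\rangle_{\R^2}=0$ kills the other.

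Consequently, the fallback you propose for the ``exceptional configurations'' --- the linear-dependence condition (\ref{LD}), the LARC, and the invariance $\varphi_s^1(\AC)\subset\AC$ --- is not needed here and, as described, is not a proof: you do not say how the single dependence relation in (\ref{LD}) produces a contradiction when $A_1$ has rank one, and that is precisely the step that would have to carry the whole argument whenever $\eta_1\in\ker A_1$. If you promote the $\xi_1$-estimate to the primary role, the exceptional cases disappear and the proof closes along the paper's lines.
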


\begin{proof}
	Let us assume w.l.o.g. that $(1, 0)\in\Delta_1$. In this case, $\nu_1=c\eta_1$ and from equations (\ref{9}) we get that 
	$$\partial_1f=(\alpha_1-c\alpha_2)\sigma_2\hspace{.5cm}\implies\hspace{.5cm}\partial_1f\;\mbox{ is bounded on }G(\theta)\setminus\ZC_1.$$
	On the other hand, by our hypothesis $\dim\mathrm{Im}A_1=1$ and so we have the following possibilities:
	
	\begin{enumerate}
		\item {\bf $A_1$ has a pair of distinct eigenvalues:} Since $A_1\theta=\theta A_1$ we have that, on the canonical basis,
	$$A_1=\left(\begin{array}{cc}
	\beta & 0 \\ 0 & 0
	\end{array}\right)\;\;\;\;\mbox{ or }\;\;\;\;A_1=\left(\begin{array}{cc}
	0 & 0 \\ 0 & \beta
	\end{array}\right)\;\;\;\mbox{ and }\;\;\; \theta=\left(\begin{array}{cc}
	1 & 0 \\ 0 & \lambda
	\end{array}\right).$$
	Let us assume that the first case holds for $A_1$ since the analysis of the second case is analogous. By our hypothesis, we can write $\xi_1=a\mathbf{e}_1+b\mathbf{e}_2$ with $b\neq 0$, and hence 
	$$\rho_t\Lambda_{-s}^{A_1}\xi_1=a\rme^{t}\frac{1}{\beta}(\rme^{-s\beta}-1)\mathbf{e}_1-bs\rme^{t\lambda}\mathbf{e}_2.$$
    Also, from the second and third equations in (\ref{eqf}) we have that   
    $$\partial_1f=\partial_1f(\varphi_s^1)+\langle\partial_2f(\varphi_s^1), \rho_t\Lambda_s^{A_1}\xi_1\rangle_{\R^2}=\partial_1f(\varphi_s^1)-\langle\partial_2f, \rho_t\Lambda_{-s}^{A_1}\xi_1\rangle_{\R^2},$$
    implying that 
    $$\langle\partial_2f, \rho_t\Lambda_{-s}^{A_1}\xi_1\rangle_{\R^2}=\partial_1f(\varphi_s^1)-\partial_1f,$$
    is bounded for $s\in\R$. As a consequence, 
    $$\langle\partial_2f, \rho_t\Lambda_{-s}^{A_1}\xi_1\rangle_{\R^2}=a\rme^{t}\frac{1}{\beta}(\rme^{-s\beta}-1)\langle\partial_2f, \mathbf{e}_1\rangle_{\R^2}-bs\rme^{t\lambda}\langle\partial_2f, \mathbf{e}_2\rangle_{\R^2},$$
    is bounded and hence 
    $$a\langle\partial_2f, \mathbf{e}_1\rangle_{\R^2}=0\hspace{.5cm}\mbox{ and }\langle\partial_2f, \mathbf{e}_2\rangle_{\R^2}=0.$$
    If $a=0$ we get that $\xi_1\in\ker A_1=\R\cdot \mathbf{e}_2$. In particular, $\Lambda_t^{\theta}\xi_1\in\ker A_1$ and 
    $$\langle\partial_2f, \mathbf{e}_2\rangle_{\R^2}=0\;\;\;\implies \;\;\;\langle\partial_2f, \Lambda_t^{\theta}\xi\rangle_{\R^2}=0.$$
    Using the first equation in (\ref{eqf}) we get that 
    $$0=\langle\partial_2f, A_1v+\Lambda_t^{\theta}\xi\rangle_{\R^2}=\langle\partial_2f, A_1v\rangle_{\R^2},$$
    which gives us that 
    $$\forall (t, v)\in G(\theta)\setminus(\{0\}\times\ker A_1), \;\;\;\;\langle\partial_2f, \mathbf{e}_1\rangle_{\R^2}=0,$$
    and in particular $\partial_2f\equiv 0$ on $G(\theta)\setminus\ZC_1$. Therefore, $\partial_1f=\sigma_2/\sigma_1=:a$ on $G(\theta)\setminus\ZC_1$ implying, by the continuity of $f$ that $f(t, v)=at$ on $G(\theta)$ which implies the result.
    
    \item {\bf $A_1$ is nilpotent:} As in the previous case, the fact that $A_1$ and $\theta$ commutes gives us that, on the canonical basis,
    $$A_1=\left(\begin{array}{cc}
    	0 & \beta \\ 0 & 0
    \end{array}\right)\;\;\;\mbox{ and }\;\;\; \theta=\left(\begin{array}{cc}
    1 & \delta \\ 0 & 1
\end{array}\right),  \;\delta\in\{0, 1\}\;\;\;\;\mbox{ or }\;\;\;\;$$
$$A_1=\left(\begin{array}{cc}
    	0 & 0 \\ \beta & 0
    \end{array}\right)\;\;\;\mbox{ and }\;\;\; \theta=\left(\begin{array}{cc}
    	1 & 0 \\ 0 & 1
    \end{array}\right).$$
    Let us as previously treat the first case. By the rank two assumption we have that $\xi_1=a\mathbf{e}_1+b\mathbf{e}_2$ with $b\neq 0$. Therefore, 
    $$\rho_t\Lambda_{-s}^{A_1}\xi_1=\rme^t\left(b\frac{s^2}{2}-(a+b\delta t)s\right)\mathbf{e}_1-\rme^tbs\mathbf{e}_2,$$
    and, as previously, $\partial_1f$ bounded on $G(\theta)\setminus\ZC_1$ implies 
    $$\forall s\in\R, \;\;\;\;\;\langle\partial_2f, \rho_t\Lambda_{-s}^{A_1}\xi_1\rangle_{\R^2}\;\;\;\mbox{ is bounded.}$$
    Therefore,
    $$\langle\partial_2f, \rho_t\Lambda_{-s}^{A_1}\xi_1\rangle_{\R^2}=\rme^t\left(b\frac{s^2}{2}-(a+b\delta t)s\right)\langle\partial_2f, \mathbf{e}_1\rangle_{\R^2}-\rme^tbs\langle\partial_2f, \mathbf{e}_2\rangle_{\R^2}$$
    is bounded for $s\in\R$ implying that 
    $$\frac{\rme^tb}{2}\langle\partial_2f, \mathbf{e}_1\rangle_{\R^2}=0\;\;\;\mbox{ and }\;\;\;\rme^t\left((a+b\delta t)\langle\partial_2f, \mathbf{e}_1\rangle_{\R^2}+b\langle\partial_2f, \mathbf{e}_2\rangle_{\R^2}\right)=0.$$
    Since by assumption $b\neq 0$, the previous is equivalenty to
    $$\langle\partial_2f, \mathbf{e}_1\rangle_{\R^2}=\langle\partial_2f, \mathbf{e}_2\rangle_{\R^2}=0,$$
    showing that $\partial_2f\equiv 0$ on $G(\theta)\setminus\ZC_1$. As previously, by continuity $f(t, v)=at$ and hence $\partial_2f\equiv 0$ on $G(\theta)$.
    \end{enumerate}
\end{proof}

\subsection{Proof of Theorem \ref{main}}

By the previous section, if $\psi=(f, g)$ is an isometry between rank two ARSs $\Sigma_1$ and $\Sigma_2$, then $\partial_2f\equiv 0$. As a consequence, if 
$$X=(0, \eta)\in \Delta_1\cap\left(\{0\}\times\R^2\right), \;\;\;\mbox{ with }\;\;\;\|X\|_{\Sigma_1, (0, 0)}=1,$$
we have that 
$$(d\psi)_{(t, v)}X^L(t, v)=\left(\begin{array}{cc}
\partial_1f(t, v) & 0 \\ \partial_1g(t, v) & \partial_2g(t, v)
\end{array}\right)\left(\begin{array}{c}
0 \\ \rho_t\eta
\end{array}\right)=(0, \partial_2g(t, v)\rho_t\eta),$$
showing that 
$$(d\psi)_{(t, v)}X^L(t, v)\in \Delta_2(\psi(t, v))\cap\fn(\theta).$$
On other hand, 
$$(d\psi)_{(0, 0)}X\in\Delta_2(0, 0)\;\;\;\implies\;\;\;(dL_{\psi(t, v)})_{(0, 0)}(d\psi)_{(0, 0)}X\in\Delta_2(\psi(t, v))\cap\fn(\theta).$$
Since $\psi$ is an isometry and the metrics on $\Delta_1$ and $\Delta_2$ are left-invariant, we get that
$$\|(d\psi)_{(t, v)}X^L(t, v)\|_{\Sigma_2, (\psi(t, v))}=\|X^L(t, v)\|_{\Sigma_1, (t, v)}=\|X\|_{\Sigma_1, (0, 0)}=1,$$
and
$$\|(dL_{\psi(t, v)})_{(0, 0)}(d\psi)_{(0, 0)}X\|_{\Sigma_2, (\psi(t, v))}=\|(d\psi)_{(0, 0)}X\|_{\Sigma_2, (0, 0)}=\|X\|_{\Sigma_1, (0, 0)}=1.$$

The fact that $\dim(\Delta_2(\psi(t, v)))\cap\left(\{0\}\times\R^2\right)=1$ implies
$$(d\psi)_{(t, v)}X^L(t, v)=\pm(dL_{\psi(t, v)})_{(0, 0)}(d\psi)_{(0, 0)}X,$$
where the sign is constant on any connected component of $G(\theta)\setminus\ZC_1$. An analogous analysis as the one done in the proof of Step 1 of the Fundamental Lemma implies that the sign is constant on the whole $G(\theta)$ and consequently 
$$(d\psi)_{(t, v)}X^L(t, v)=(dL_{\psi(t, v)})_{(0, 0)}(d\psi)_{(0, 0)}X.$$
By the Fundamental Lemma (\ref{fundamentallemma}) we conclude that $\psi$ is an automorphism of $G(\theta)$, concluding the proof.

\subsection{A classification result for rank two ARSs}	
	
	The results in the previous sections allow us to obtain a classification result for rank two simple ARSs.
	
	For any $\sigma\in\R^+$ we consider the subsets of $\fg(\theta)$ given by
	$$\alpha_1=\{(1, 0), (\sigma, \mathbf{e}_1)\}, \;\;\alpha_2=\{(1, 0), (\sigma, \mathbf{e}_2)\}\;\;\mbox{ and }\;\;\alpha_3=\{(1, 0), (\sigma, \mathbf{e}_1+\mathbf{e}_2)\}.$$
	Define the simple ARSs of rank two $\Sigma^i_{\XC, \sigma}=\{\XC, \Delta^L_{i, \sigma}\}$ on $G(\theta)$, where 
	\begin{enumerate}
		\item $\XC$ is a rank two linear vector field on $G(\theta)$;
		\item $\alpha_i$ is an orthonormal basis of $\Delta_{i, \sigma}$.
	\end{enumerate}
	
   Denote by $\EC_{\theta}$ the set of all rank two simple ARS on $G(\theta)$ and consider the sets 
   $$\EC^i_{\theta}=\{\Sigma\in \EC_{\theta}; \;\mathrm{Iso}(\Sigma; \Sigma^i_{\XC, \sigma})_0\neq\emptyset\;\mbox{ for some }\Sigma^i_{\XC, \sigma}\},$$
   that is, $\EC^i_{\theta}$ is the set of rank two ARSs on $G(\theta)$ that are isometric to some of the ARSs $\Sigma^i_{\XC, \sigma}$.

 \begin{theorem}
	 	Up to a reescaling, it holds that
	 	\begin{itemize}
	 		\item[(i)] $\EC_{\theta}=\EC^1_{\theta}$ if $\theta\in\left\{\left(\begin{array}{cc}
	 			1 & 0\\0 & 1
	 		\end{array}\right), \; \left(\begin{array}{cc}
	 		\gamma & -1\\1 & \gamma
 		\end{array}\right), \;\gamma\in\R\right\}$;
 	
 	\item[(ii)]  $\EC_{\theta}=\EC^1_{\theta}\,\dot{\cup}\,\EC^3_{\theta}$ if $\theta\in\left\{\left(\begin{array}{cc}
 		1 & 1\\0 & 1
 	\end{array}\right), \; \left(\begin{array}{cc}
 		1 & 0\\0 & -1
 	\end{array}\right)\right\}$;
 
 \item[(iii)]  $\EC_{\theta}=\EC^1_{\theta}\,\dot{\cup}\,\EC^2_{\theta}\,\dot{\cup}\,\EC^3_{\theta}$ if $\theta\in\left\{\left(\begin{array}{cc}
 	1 & 0\\0 & \gamma
 \end{array}\right), \gamma\in(-1, 1)\right\}$.
	 	\end{itemize}
  \end{theorem}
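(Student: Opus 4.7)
The strategy is to reduce the classification of rank two ARSs on $G(\theta)$ to an orbit analysis at the Lie algebra level. By Theorem~\ref{main} combined with Proposition~\ref{difeo}, two rank two ARSs are isometric via an identity-fixing isometry if and only if they are related by an automorphism of $G(\theta)$. Hence $\EC_\theta$ is partitioned into the orbits of the natural $\mathrm{Aut}(G(\theta))$-action, and it suffices to exhibit, for each $\theta$, representatives drawn from the families $\Sigma^i_{\XC, \sigma}$.

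First I would observe that $\Delta$ must be transverse to the nilradical $\fn(\theta) = \{0\} \times \R^2$. Indeed, $\fn(\theta)$ is an abelian subalgebra satisfying $\DC \fn(\theta) \subset \fn(\theta)$, since $\DC(0, \eta) = (0, A\eta)$ lies in $\fn(\theta)$; thus $\Delta = \fn(\theta)$ would violate the LARC. Consequently $\Delta \cap \fn(\theta) = \R \cdot (0, \eta)$ for some nonzero $\eta \in \R^2$, and one can write $\Delta = \R(1, w_0) + \R(0, \eta)$ for a suitable $w_0 \in \R^2$. An automorphism of $\fg(\theta)$ of the form $\left(\begin{smallmatrix} \varepsilon & 0 \\ \mu & P \end{smallmatrix}\right)$ sends $(1, w_0) \mapsto (\varepsilon, \mu + P w_0)$ and $(0, \eta) \mapsto (0, P\eta)$; choosing $\mu = -Pw_0$ (with $\varepsilon = 1$ whenever admissible) absorbs the transverse part of the basis, reducing the transformed distribution to $\R(1, 0) + \R(0, P\eta)$. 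The essential invariant of $\Delta$ modulo automorphism is therefore the orbit of the line $\R \cdot \eta \subset \R^2$ under the action of the matrices $P$ satisfying $P\theta = \varepsilon\theta P$, with $\varepsilon = 1$ when $\tr\theta \ne 0$ and $\varepsilon \in \{\pm 1\}$ when $\tr\theta = 0$.

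I would then carry out the orbit analysis of $\R^2 \setminus \{0\}$ case by case. For $\theta = I_{\R^2}$ the commutation condition on $P$ is vacuous, so $P$ ranges over all of $\mathrm{GL}_2(\R)$ and there is a single orbit; for $\theta$ with a pair of complex-conjugate eigenvalues the centralizer is the image of the embedding $\C \hookrightarrow \mathfrak{gl}(2, \R)$, which (completed by the $\varepsilon = -1$ reflections available when $\gamma = 0$) still acts transitively, yielding $\EC_\theta = \EC^1_\theta$ in case (i). For the Jordan block $\theta = \left(\begin{smallmatrix} 1 & 1 \\ 0 & 1 \end{smallmatrix}\right)$ the centralizer consists of matrices $\left(\begin{smallmatrix} p & q \\ 0 & p \end{smallmatrix}\right)$ with $p \ne 0$, which preserves the eigenline $\R\mathbf{e}_1$ and acts transitively on its complement; the two orbits normalize to $\mathbf{e}_1$ and $\mathbf{e}_1 + \mathbf{e}_2$, producing $\EC^1_\theta \,\dot{\cup}\, \EC^3_\theta$. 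For $\theta = \mathrm{diag}(1, -1)$, where $\tr\theta = 0$, one combines the diagonal $P$'s from $\varepsilon = 1$ with the anti-diagonal $P$'s from $\varepsilon = -1$ and obtains again exactly two orbits: the union of coordinate axes $(\R\mathbf{e}_1 \cup \R\mathbf{e}_2) \setminus \{0\}$ (normalizing to $\mathbf{e}_1$) and its complement (normalizing to $\mathbf{e}_1 + \mathbf{e}_2$), completing case (ii). Finally, for $\theta = \mathrm{diag}(1, \gamma)$ with $\gamma \in (-1, 1)$ we have $\tr\theta \ne 0$, so only diagonal $P$'s appear and $\R^2 \setminus \{0\}$ splits into the three orbits $\R\mathbf{e}_1 \setminus \{0\}$, $\R\mathbf{e}_2 \setminus \{0\}$ and $\{(x, y) : xy \ne 0\}$, giving case (iii).

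It only remains to match the inner product. Once the subspace has been brought to $\Delta_{i, \sigma}$, the value of $\sigma > 0$ together with the overall scale of the metric (the ``reescaling'' in the statement) are determined from the Gram matrix of $g$ in the original basis, using the residual freedom to rescale $P$ within the centralizer; disjointness of the classes $\EC^i_\theta$ in cases (ii) and (iii) is then automatic, since the three distinguished lines $\R\mathbf{e}_1$, $\R\mathbf{e}_2$, $\R(\mathbf{e}_1 + \mathbf{e}_2)$ lie in pairwise distinct $P$-orbits and this orbit, being an automorphism invariant, is an isometry invariant by Theorem~\ref{main}. The main technical obstacle I anticipate lies in the $\theta = \mathrm{diag}(1, -1)$ case, where the interplay between the $\varepsilon = 1$ and $\varepsilon = -1$ automorphisms must be handled with some care to confirm that exactly two (rather than three) orbits arise, and correspondingly that $\EC^1_\theta = \EC^2_\theta$ in that case.
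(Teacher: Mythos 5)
Your proposal is correct and follows essentially the same route as the paper: reduce via Theorem \ref{main} and Proposition \ref{difeo} to the orbit of the line $\Delta\cap\fn(\theta)$ under the matrices $P$ with $P\theta=\varepsilon\theta P$, count those orbits case by case, and derive disjointness from the fact that this orbit is an automorphism (hence isometry) invariant. The paper simply makes the orbit analysis concrete by writing down the explicit matrices $P_i$ realizing each normalization, including the $\varepsilon=-1$ anti-diagonal matrices in the $\theta=\operatorname{diag}(1,-1)$ case that you correctly identify as the delicate point.
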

	 \begin{proof}
	 	Since, by Theorem \ref{main}, the only isometries between rank two ARS are automorphisms, we only have to show that any given ARS $\Sigma$ is isometric to some ARS in $\EC^i_{\theta}$ for $i=1, 2, 3$ and that for $i\neq j$ we have that $\EC^i_{\theta}\cap \EC^j_{\theta}=\emptyset$ if $\theta$ is in the cases (ii) or (iii).
	 	
	    Let us consider $\Sigma=\{\XC, \Delta^L\}$ be a rank two ARS. By Proposition \ref{difeo} the ARS $\Sigma$ is isometric to an ARS whose distribution contains $(1, 0)$. Consequently, we can assume w.l.o.g. that $(1, 0)\in\Delta$. By reescaling the norm on $\Delta$ if necessary we can assume that $\|(1, 0)\|_{\Sigma, (0, 0)}=1$. Choose $(\sigma, \eta)\in \Delta$ such that $\sigma>0$ and $\{(1, 0), (\sigma, \eta)\}$ is an orthonormal basis of $\Delta$. Note that, in this case  
	 	$$(0, \eta )=-\sigma (1, 0)+ (\sigma, \eta)\hspace{.5cm}\implies\hspace{.5cm} l_{\Delta}=\R \cdot\eta.$$
	 	Write $\eta=(x, y)$ and consider the following cases:
	 	
	 	\bigskip
	 	
	 	{\bf Case 1:} $\theta\in\left\{\left(\begin{array}{cc}
	 		1 & 0\\ 0 & 1
	 	\end{array}\right), \;\left(\begin{array}{cc}
	 		\gamma & -1\\ 1 & \gamma
	 	\end{array}\right), \;\gamma\in\R\right\}$.
	 	
	 	In this case, by considering $P=\left(\begin{array}{cc}
	 		x & -y\\ y & x
	 	\end{array}\right)$ we have that  
	 	$$P\theta=\theta P, \hspace{.5cm}\det P=x^2+y^2\neq 0\hspace{.5cm}\mbox{ and }\hspace{.5cm} P\mathbf{e}_1=\eta.$$
	 	The automorphism $\phi(t, v)=(t, Pv)$ is an isometry between $\Sigma^1_{\XC_{\phi}, \sigma}$ and $\Sigma$. 
	 	
	 	\bigskip
	 	
	 	{\bf Case 2:} $\theta=\left(\begin{array}{cc}
	 		1 & 1\\ 0 & 1
	 	\end{array}\right)$.
	 	
	 	In this case, by considering  
	 	$$P_1=\left(\begin{array}{cc}
	 		x & 0\\ 0 & x
	 	\end{array}\right)\;\;\;	\mbox{ if }\;\;\;y=0\;\;\;\mbox{ or }\;\;\; P_3=\left(\begin{array}{cc}
	 		y & x-y\\ 0 & y
	 	\end{array}\right)\;\mbox{ if }\;y\neq 0,$$
 	     we get that $P_i\theta=\theta P_i$, $\det P_i\neq 0$ and 
	 	$$P_1\mathbf{e}_1=\eta\;\;\mbox{ if }\;y=0\;\;\mbox{ and }\;\;P_3(\mathbf{e}_1+\mathbf{e}_2)=\eta\;\;\mbox{ if }\;y\neq 0.$$
	 	Therefore, if $y=0$ the automorphism $\phi_1(t, v)=(t, P_1v)$ is an isometry between $\Sigma^1_{\XC_{\phi_1}, \sigma}$ and $\Sigma$, and if $y\neq 0$ the automorphism $\phi_3(t, v)=(t, P_3v)$ is an isometry between $\Sigma^3_{\XC_{\phi_3}, \sigma}$ and $\Sigma$.
		 
		 \bigskip
	 	
	 	{\bf Case 3:} $\theta=\left(\begin{array}{cc}
	 		1 & 0\\ 0 & -1
	 	\end{array}\right)$.
	 	
	 	In this case, we consider 
	 	$$P_1=\left(\begin{array}{cc}
	 		x & 0\\ 0 & x
	 	\end{array}\right)\mbox{ if }y=0, \;P_2=\left(\begin{array}{cc}
	 		0 & y\\ y & 0
	 	\end{array}\right)\mbox{ if }x=0\mbox{ or } P_3=\left(\begin{array}{cc}
	 		x & 0\\ 0 & y
	 	\end{array}\right)\;\mbox{ if }\;xy\neq 0.$$ 
	 	It holds that $P_i\theta=\theta P_i$ for $i=1 \mbox{ or } 3$ and $P_2\theta=-\theta P_2$, $\det P_i\neq 0$ and 
	 	$$P_1\mathbf{e}_1=\eta\;\;\mbox{ if }\;y=0, \;\;P_2\mathbf{e}_1=\eta\;\;\mbox{ if }\;y=0\;\;\mbox{ and }\;\;P_3(\mathbf{e}_1+\mathbf{e}_2)=\eta\;\;\mbox{ if }\;xy\neq 0.$$
	 	As in the previous case, if $y=0$ the automorphism $\phi_1(t, v)=(t, P_1v)$ ({\it resp. if $x=0$ the automorphism $\phi_2(t, v)=(-t, P_2v)$}) is an isometry between $\Sigma^1_{\XC_{\phi_1}, \sigma}$ ({\it resp. $\Sigma^1_{\XC_{\phi_2}, \sigma}$}) and $\Sigma$,  and $\phi_3(t, v)=(t, P_3v)$  is an isometry between $\Sigma^3_{\XC_{\phi_3}, \sigma}$ and $\Sigma$ if $xy\neq 0$.
	 	
	 	{\bf Case 4:} $\theta=\left(\begin{array}{cc}
	 		1 & 0\\ 0 & \lambda
	 	\end{array}\right), \;\;\lambda\in (1, -1)$.
	 	
	 	In this case, we consider 
	 	$$P_1=\left(\begin{array}{cc}
	 		x & 0\\ 0 & x
	 	\end{array}\right)\hspace{.3cm}\mbox{ if }\hspace{.3cm}y=0,\hspace{.3cm} P_2=\left(\begin{array}{cc}
	 		y & 0\\ 0 & y
	 	\end{array}\right)\hspace{.3cm}\mbox{ if }\hspace{.3cm}x=0\hspace{.3cm}\mbox{ or } \hspace{.3cm}P_3=\left(\begin{array}{cc}
	 		x & 0\\ 0 & y
	 	\end{array}\right)\hspace{.3cm}\mbox{ if }\hspace{.3cm}xy\neq 0.$$ 
	 	Again by construction $P_i\theta=\theta P_i$, $\det P_i\neq 0$ and 
	 	$$P_1\mathbf{e}_1=\eta\;\;\mbox{ if }\;y=0, \;\;P_2\mathbf{e}_2=\eta\;\;\mbox{ if }\;x=0\;\;\mbox{ and }\;\;P_3(\mathbf{e}_1+\mathbf{e}_2)=\eta\;\;\mbox{ if }\;xy\neq 0,$$
	 	and the respectively automorphisms $\phi_i(t, v)=(t, P_iv)$ are isometries between $\Sigma_{\XC_{\phi_i},, \sigma}$ and $\Sigma$, for $i=1, 2, 3$.
	 	
	 	Since the previous cases cover all the possibilities, we have that $\EC$ is in fact decomposed by the classes $\EC_{\theta}^i$ as given in items (i), (ii) and (iii). The only thing that remains to show is that, in cases (ii) and (iii) we have that $\EC^i_{\theta}\cap \EC^j_{\theta}=\emptyset$ for $i\neq j$. 
	 	
	 	Since both cases are analogous, let us show case (ii). In this case, if $\EC^1_{\theta}\cap\EC^3_{\theta}\neq\emptyset$, there exists rank two linear vector fields $\XC_1, \XC_3$ and positive real numbers $\sigma_1, \sigma_2$ such that $\Sigma^1_{\XC_1, \sigma_1}$ and $\Sigma^3_{\XC_3, \sigma_3}$ are isometrics. However, since $\Sigma^1_{\XC_1, \sigma_1}$ and $\Sigma^3_{\XC_3, \sigma_3}$ are, by definition, rank two ARSs, Theorem \ref{main} implies that $$\mathrm{Iso}\left(\Sigma^1_{\XC_1, \sigma_1}; \Sigma^3_{\XC_3, \sigma_3}\right)_0\subset\mathrm{Aut}(G(\theta)).$$
	 	Therefore, any $\psi\in \mathrm{Iso}\left(\Sigma^1_{\XC_1, \sigma_1}; \Sigma^3_{\XC_3, \sigma_3}\right)_0$ satisfies  
	 	$$(d\psi)_{(0, 0)}=\left(\begin{array}{cc}
	 		\varepsilon & 0\\ \eta & P
	 	\end{array}\right), \;\;\mbox{ with }\;\; P\theta=\varepsilon\theta P,$$
 	    which implies, in particular, that 
 	    $$(d\psi)_{(0, 0)}\left(\Delta_{1, \sigma}\cap\fn(\theta)\right)=\Delta_{3, \sigma}\cap\fn(\theta)\hspace{.5cm}\mbox{ and hence }\hspace{.5cm}P\mathbf{e}_1\in\R\cdot(\mathbf{e}_1+\mathbf{e}_2).$$
 	    However, by the hypothesis on $\theta$, the subspace $\R\cdot \mathbf{e}_1$ is a one dimensional eigenspace of $\theta$. Since $P\theta=\varepsilon \theta P$ we get that
 	    $$P\mathbf{e}_1\in\R \cdot \mathbf{e}_1 \;\mbox{ if }\;\varepsilon=1\hspace{.5cm}\mbox{ and }\hspace{.5cm}P\mathbf{e}_1\in\R\cdot \mathbf{e}_2 \;\mbox{ if }\;\varepsilon=-1,$$
 	    which contradicts $P\mathbf{e}_1\in\R\cdot(\mathbf{e}_1+\mathbf{e}_2)$. Therefore, $\EC^1_{\theta}\cap\EC^3_{\theta}=\emptyset$ as stated.	 	
	 \end{proof}

	 \begin{remark}
	 	In the notation of the previous result, let us note that
	 	$$(0, \eta)=-\sigma(1, 0)+(\sigma, \eta)\hspace{.5cm}\implies\hspace{.5cm}\|(0, \eta)\|_{\Sigma, (0, 0)}=1+\sigma^2.$$
	 	In particular, $\|(0, \eta)\|_{\Sigma, (0, 0)}=1$ if and only if $\sigma=0$. Consequently, the metric on $\Delta$ is Euclidean if and only if $\sigma=0$.
	 \end{remark}

\end{document}